\definecolor{amaranth}{rgb}{0.9, 0.17, 0.31}
\definecolor{bluegray}{rgb}{0.4, 0.6, 0.8}
\newtheorem*{maintheorem*}{Main Theorem}
\newtheorem{theorem}{Theorem}[section]
\newtheorem{proposition}[theorem]{Proposition}
\newtheorem{corollary}[theorem]{Corollary}
\newtheorem{lemma}[theorem]{Lemma}
\newtheorem*{theorem*}{Theorem}
\newtheorem{remark}[theorem]{Remark}
\newtheorem*{example*}{Example}
\newtheorem*{conjecture*}{Conjecture}
\def\1{\mathbf 1}
\def\S{\mathbf S}
\def\s{\mathbf s}
\def\T{\mathbf T}
\def\t{\mathbf t}
\def\X{\mathbf X}
\def\Y{\mathbf Y}
\def\0{\mathbf 0}
\def\cC{\mathcal C}
\def\cE{\mathcal E}
\def\cH{\mathcal H}
\def\cO{\mathcal O}
\def\cP{\mathcal P}
\def\cX{\mathcal X}
\def\cS{\mathcal S}
\def\cQ{\mathcal Q}
\def\cW{\mathcal W}
\def\PG{{\rm PG}}
\def\PGL{{\rm PGL}}
\def\SL{{\rm SL}}
\def\PSL{{\rm PSL}}
\def\PSp{{\rm PSp}}
\def\PO{{\rm PO}}
\def\mP{{\rm P}}
\def\PGU{{\rm PGU}}
\def\Fq{\mathbb F_{q}}
\def\Fqr{\mathbb F_{q^r}}
\def\Fqt{\mathbb F_{q^2}}
\def\Fqf{\mathbb F_{q^{4}}}
\def\Im{{\rm Im }}
\def\Rad{{\rm Rad }}
\def\Tr{{\rm Tr}}
\def\<{\langle}
\def\>{\rangle}
\newcommand\comment[1]{}
\newcommand*{\shifttext}[2]{
  \settowidth{\@tempdima}{#2}
  \makebox[\@tempdima]{\hspace*{#1}#2}
}
\newcommand\redsout{\bgroup\markoverwith{\textcolor{amaranth}{\rule[0.5ex]{2pt}{0.4pt}}}\ULon}
\title{On the isomorphism of certain primitive\\ $Q$-polynomial  not $P$-polynomial \\ association schemes}
\author{Giusy Monzillo\footnote{The research was supported by the Italian National Group for Algebraic and Geometric Structures and their Applications (GNSAGA-INdAM). } \\
\small {\tt giusy.monzillo@unibas.it}\\
\and
Alessandro Siciliano \\
\small{\tt alessandro.siciliano@unibas.it}\\[0.8ex]
\small Dipartimento di Matematica, Informatica ed Economia\\[-0.8ex]
\small Universit\`a degli Studi della Basilicata\\[-0.8ex]
\small Viale dell'Ateneo Lucano 10 - 85100 Potenza (Italy)
\small Potenza, Italy\\
}
\begin{document}

\date{}

\maketitle

\begin{abstract}
In 2011,   Penttila and Williford constructed an infinite new  family of primitive $Q$-polynomial 3-class  association schemes, not arising from distance regular graphs, by exploring  the geometry of the lines of the unitary polar space $H(3,q^2)$, $q$ even, with respect to a symplectic polar space $W(3,q)$  embedded in  it. 

In a private communication to Penttila and Williford, H.~Tanaka pointed out that these schemes    have the same parameters as the 3-class  schemes  found by Hollmann and Xiang in 2006 by considering  the action of $\PGL(2,q^2)$, $q$ even, on a  non-degenerate conic of $\PG(2,q^2)$ extended in $\PG(2,q^4)$.  Therefore, the question arises whether the above association schemes are isomorphic.
In this paper we provide the positive answer. As by product, we get an isomorphism of strongly regular graphs.

\end{abstract}

{\it Keywords: Association scheme, Finite geometry, Hemisystem}             

{\it Math. Subj. Class.: 05E30, 51E20} 

\section{Introduction}

Let $\mathfrak X=(X,\{R_i\}_{0\le i\le d})$ be a (symmetric) association scheme with $d$ classes. For $0\le i\le d$, let $A_i$ be the adjacency matrix of the relation $R_i$, and $E_i$ the $i$-th primitive idempotent of the Bose-Mesner algebra of $\mathfrak X$ which projects on the $i$-th maximal common eigenspace of $A_0,\ldots,A_d$. The  matrices $P$ and $Q$ defined by
\[
(A_0\ A_1\ \ldots \ A_d)=(E_0\ E_1\ \ldots \ E_d)P
\]
and 
\[
(E_0\ E_1\ \ldots \ E_d)=|X|^{-1}(A_0\ A_1\ \ldots \ A_d)Q
\]
are the {\em first} and the {\em second eigenmatrix} of $\mathfrak X$, respectively. 

An association scheme is said to be  $P${\em -polynomial}, or {\em metric}, if, after a reordering of the relations, there are polynomials $p_i$ of degree $i$ such that $A_i = p_i(A_1)$; an association scheme is called $Q${\em -polynomial}, or {\em cometric}, if, after a reordering of the eigenspaces, there are polynomials $q_i$ of degree $i$ such that $E_i =q_i(E_1)$, where multiplication is done entrywise. The reader is referred to \cite{bi,bcn} for further information on association schemes.

Two association schemes $\mathfrak X=(X,\{R_i\}_{0\le i\le d})$ and $\mathfrak X'=(X',\{R'_i\}_{0\le i\le d})$ are {\em isomorphic} if there exists a bijection $\varphi$ from $X$ to $X'$ such that for each $i\in\{0,\ldots,d\}$ there exists $j\in\{0,\ldots,d\}$ satisfying $\{(\varphi(x),\varphi(y)): (x,y)\in R_i\}=R'_j$; the mapping $\varphi$ is called an  {\em isomorphism} from $\mathfrak X$ to $\mathfrak X'$.
 
The idea of $P$-polynomial and $Q$-polynomial schemes was introduced by Delsarte in \cite{del}, who observed a formal duality between the two notions. Delsarte also noted that $\mathfrak X$ is  $P$-polynomial if and only if, after a proper re-ordering of the relations,  $(X,R_1)$ is  a distance-regular graph \cite[Theorems 5.6 and 5.16]{del}.   
On the other hand, $Q$-polynomial schemes which are neither $P$-polynomial nor duals of $P$-polynomial schemes seem to be quite rare. In \cite{vdmm}  van Dam, Martin and  Muzychuk  constructed an infinite family of such schemes  from hemisystems of the unitary polar space $H(3,q^2)$  provided in \cite{w}. In 2011, Penttila and Williford \cite{pw} constructed another infinite family of  $Q$-polynomial 3-class  association schemes, not $P$-polynomial nor the dual of a $P$-polynomial, by considering a relative hemisystem of $H(3,q^2)$, $q$ even, with respect  to a symplectic polar space $W(3,q)$ embedded in  it. These schemes differ from all those previously known, they being primitive.  The known examples of $Q$-polynomial schemes which are not $P$-polynomial are listed in   \cite{mmw,w}.

We underline that the  Penttila-Williford $3$-class schemes are obtained by applying  \cite[Theorem 2]{pw}  which provides primitive $Q$-polynomial  subschemes of $Q$-polynomial $Q$-bipartite  schemes defined on certain generalized quadrangles. This result can be viewed as a reversal of the so-called \textquote{extended $Q$-bipartite double} construction given  in  \cite{mmw}. On the other hand, looking at the Krein array of the  generic Penttila-Williford scheme, we may note that it comes from  a strongly regular graph after splitting one of its relations in two.

In a private communication to the authors of \cite{pw}, H.~Tanaka pointed out that their 3-class schemes  have the same parameters as the 3-class schemes provided by Hollmann and Xiang in \cite{hx}. The latter, which were previously not noticed to be  $Q$-polynomial, are obtained as  fusion of association schemes constructed from the action of the projective group $\PGL(2,q^2)$, $q$ even,  on a non-degenerate conic in the Desarguesian projective plane $\PG(2,q^2)$ extended in $\PG(2,q^4)$.

Therefore the question arises whether there exists an isomorphism that takes the Pentilla-Williford association  schemes to the Hollmann-Xiang fusion schemes. In this paper, we provide the answer by proving the following result:

\begin{maintheorem*}\label{main_th}
The  Penttila-Williford 3-class association  schemes and  the  Hollmann-Xiang  fusion association schemes  are isomorphic.
\end{maintheorem*}

The proof  essentially uses geometric arguments.  We start off with an explicit description of  the Penttila-Williford relative hemisystems in terms of coordinates in the projective space $\PG(3,q^2)$.  Via the Klein correspondence from the lines of $\PG(3,q^2)$ to the points of the Klein  quadric of $\PG(5,q^2)$, we obtain a geometric representation of the Penttila-Williford association schemes  in the orthogonal polar space $Q^-(5,q)$ whose points are the image of the  lines in  $H(3,q^2)$ \cite{hir}.  Thanks to	this representation we are able to find a desired isomorphism.

In \cite{hx} it was pointed out that a further fusion scheme of the 3-class Hollmann-Xiang scheme   produces a  strongly regular graph  with parameters $v=q^2(q^2-1)/2$, $k=(q^2+1)(q-1)$, $\lambda=q^2+q-2$, $\nu=2(q^2-q)$. These graphs have the same parameters of the ones found by R. Metz \cite{dt}, which can be also constructed from  a fusion of the Penttila-Williford schemes; see also \cite[p.189]{br}. These graphs are denoted by $NO^-(5,q)$ in Brouwer's table of strongly regular graphs \cite{brouwer}.

The paper \cite{hx} announced an alleged isomorphism between the above graphs in a forthcoming paper.  To the best of our knowledge, such a paper appears to have never been published. Anyway, the Main Theorem confirms the conjectured isomorphism.

The paper is structured as follows: in Section 2 we recall the construction of the Hollmann-Xiang  and  Penttila-Williford association schemes. In Sections 3 we give a coordinatization of the relative hemisystems of Penttila and Williford together with their representation in $Q^-(5,q)$. Finally, Section 4 contains the proof of the Main Theorem.

\section{Preliminaries} \label{sec_2}

For any given $n$-dimensional vector space $V=V(n,F)$ over a field $F$,  the {\em projective geometry defined by} $V$ is the partially ordered set of all subspaces of $V$, and it will be denoted by  $\PG(V)$. If $F$ is the finite field $\Fq$ with $q$ elements, then we may write $V=V(n,q)$ and $\PG(n-1,q)$ instead of $\PG(V)$. The 1-dimensional subspaces are called {\em points}, the 2-dimensional subspaces are called {\em lines}, and  the $(n-1)$-dimensional subspaces are called {\em hyperplanes} of $\PG(V)$. For a nonzero $v\in V$, $\<v\>$ will denote the point of $\PG(V)$ spanned by $v$. In order to simplify notation, for each subspace $U$ of $V$, that is an element of $\PG(V)$, we will use the same letter for the projective geometry defined by $U$.
If $V$ is endowed with a non-degenerate alternating, quadratic or hermitian form of Witt index $m$, the set $\cP$ of totally isotropic (or singular, in the case of quadratic form) subspaces of $V$ is a {\em polar space of rank $m$} of $\PG(V)$, which is called {\em symplectic, orthogonal}  or {\em unitary}, respectively. Our principal reference on projective geometries and polar spaces  is \cite{taylor}.

\subsection{The Hollmann-Xiang association schemes}

 A {\em non-degenerate conic} $\cC$ of $\PG(2,q^2)$ is an orthogonal polar space (of rank 1)  arising from a non-degenerate quadratic form $Q$ on  $V(3,q^2)$. A line $\ell$ of $\PG(2,q^2)$ is called a  {\em passant}, {\em tangent} or {\em secant}  of $\cC$ according as  $|\ell\cap\cC|=0$, 1 or 2.

Embed $\PG(2,q^2)$ in $\PG(2,q^4)$. Concretely this can be done by extending the scalars in  $V(3,q^2)$. It follows that each point of $\PG(2,q^2)$ extends to a point of $\PG(2,q^4)$. Similarly,   
each  line $\ell$ of $\PG(2,q^2)$  extends to a  line $\bar\ell$ of $\PG(2,q^4)$.
  The extension $\overline Q$ of  $Q$  in $V(3,q^4)$ is a non-degenerate quadratic form, and it defines a (non-degenerate) conic $\overline\cC$ in $\PG(2,q^4)$. While the extension $\bar\ell$ of a tangent  (or  secant) line $\ell$ of $\cC$ is  a tangent (or secant)  of $\overline\cC$,
  the extension of a passant line of $\cC$ is a secant of $\overline\cC$. Such a line  is called an {\em elliptic} line of $\overline\cC$, and we will denote by  $\cE$ the set of these lines.  Note that $\cE$ has size $(q^4-q^2)/2$.

Since all non-degenerate quadratic forms on $V(3,q^2)$ are equivalent, we may assume 
\[
\begin{array}{cccc}
\overline Q: & V(3,q^4) & \rightarrow & \Fqf \\
             & (x, y, z)  & \mapsto & y^2 - xz.
 \end{array}
 \]
Therefore,
 \[
\overline\cC =\{\<(1,t,t^2)\>:t\in\ \Fqf\}\cup \{\<(0,0,1)\>\}
\] and  
\[
\cC =\{\<(1,t,t^2)\>:t\in\ \Fqt\}\cup \{\<(0,0,1)\>\}.
\]
Therefore, for every elliptic line $\bar\ell$ of $\overline\cC$ we have  $\bar\ell\cap\overline\cC=\{\<(1,t,t^2)\>,\<(1,t^{q^2},t^{2q^2})\>\}$, for some  $t\in\Fqf\setminus\Fqt$. The reader is referred to \cite{hx} for more details.

Under the identification of $\Fqf\cup\{\infty\}$ with $\overline\cC$ given by 
\begin{equation}\label{eq_1}
\xi:  \ \   t   \leftrightarrow    \<(1,t,t^2)\>, \ \ \ 
    \   \infty    \leftrightarrow   \<(0,0,1)\>,
\end{equation}
 the pair $\t=\{t,t^{q^2}\}$, with $t\in\Fqf\setminus\Fqt$, may be associated with  the elliptic line intersecting $\overline\cC$ at  $\{\<(1,t,t^2)\>,\<(1,t^{q^2},t^{2q^2})\>\}$. We will use  $\bar\ell_{\t}$ to denote this line. 

We assume $q$ is even.  For any given pair of distinct elliptic lines $\bar\ell_{\s},\bar\ell_{\t}$, let
\begin{equation}\label{eq_2}
\widehat\rho(\bar\ell_{\s},\bar\ell_{\t})=\widehat\rho(\s,\t)=\frac{1}{\rho(s,t)+\rho(s,t)^{-1}},
\end{equation}
where
\begin{equation}\label{eq_10}
\rho(s,t)=\frac{(s+t)(s^{q^2}+t^{q^2})}{(s+t^{q^2})(s^{q^2}+t)}.
\end{equation}
It is evident that  $\Im\,\widehat\rho$ is a subset of $\Fqt$. The following result is straigtforward.
\begin{lemma}\em{\cite[Lemma 5.1]{hx}}\label{lem_1}
\[
\widehat\rho(\s,\t)=\frac{(s+t)(s^{q^2}+t^{q^2})(s+t^{q^2})(s^{q^2}+t)}{(s+s^{q^2})^2(t+t^{q^2})^2}=\left(\frac{1}{\rho(s,t)+1}\right)^2+\left(\frac{1}{\rho(s,t)+1}\right).
\]
\end{lemma}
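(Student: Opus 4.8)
The plan is to verify the two claimed identities directly, treating them as formal algebraic identities in the indeterminates $s,t$ (with $s^{q^2},t^{q^2}$ regarded as independent symbols subject only to the relations coming from $\Fqf$), and then reading off the geometric consequence. Recall that $\widehat\rho(\s,\t)$ was \emph{defined} in \eqref{eq_2} as $1/\bigl(\rho(s,t)+\rho(s,t)^{-1}\bigr)$, and that $\rho(s,t)$ is the explicit cross-ratio-type expression in \eqref{eq_10}. So the whole lemma is a matter of manipulating rational functions; there is no real structural content beyond bookkeeping, which is presumably why \cite{hx} and the authors call it straightforward.

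First I would handle the rightmost equality, which does not involve $q$ at all. Writing $u=1/(\rho(s,t)+1)$, the claim $\widehat\rho(\s,\t)=u^2+u=u(u+1)$ can be checked by computing $u(u+1)=\dfrac{1}{\rho+1}\cdot\dfrac{\rho+2}{\rho+1}$. Hmm — since $q$ is even we are in characteristic $2$, so $\rho+2=\rho$, and thus $u(u+1)=\dfrac{\rho}{(\rho+1)^2}$. On the other hand $1/(\rho+\rho^{-1})=\dfrac{\rho}{\rho^2+1}=\dfrac{\rho}{(\rho+1)^2}$, again using characteristic $2$ to write $\rho^2+1=(\rho+1)^2$. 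The two agree, establishing $\widehat\rho(\s,\t)=u^2+u$. This also transparently re-proves that $\Im\widehat\rho\subseteq\Fqt$: the map $x\mapsto x^2+x$ on $\Fqf$ has image exactly the kernel of the trace $\Fqf\to\Fqt$, and $u=1/(\rho(s,t)+1)\in\Fqf$.

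Next I would establish the leftmost equality, i.e.\ that $\dfrac{\rho}{(\rho+1)^2}$ equals $\dfrac{(s+t)(s^{q^2}+t^{q^2})(s+t^{q^2})(s^{q^2}+t)}{(s+s^{q^2})^2(t+t^{q^2})^2}$. Here I substitute $\rho=\dfrac{(s+t)(s^{q^2}+t^{q^2})}{(s+t^{q^2})(s^{q^2}+t)}$ and clear denominators. The numerator $\rho$ times $(s+t^{q^2})^2(s^{q^2}+t)^2$ gives $(s+t)(s^{q^2}+t^{q^2})(s+t^{q^2})(s^{q^2}+t)$, which is exactly the desired numerator; so everything reduces to checking that
\[
(\rho+1)^2\,(s+t^{q^2})^2(s^{q^2}+t)^2=(s+s^{q^2})^2(t+t^{q^2})^2 .
\]
Since we are in characteristic $2$ this is the square of $(\rho+1)(s+t^{q^2})(s^{q^2}+t)=(s+s^{q^2})(t+t^{q^2})$, so it suffices to verify that single linear-looking identity. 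Now $(\rho+1)(s+t^{q^2})(s^{q^2}+t)=(s+t)(s^{q^2}+t^{q^2})+(s+t^{q^2})(s^{q^2}+t)$, and expanding both products and adding (in characteristic $2$, where the cross terms $st^{q^2}+s^{q^2}t$ and $ss^{q^2}+tt^{q^2}$ recombine) yields $s^2+t^2+s^{2q^2}+t^{2q^2}=(s+s^{q^2})^2+(t+t^{q^2})^2$ — wait, that is a sum, not the product $(s+s^{q^2})(t+t^{q^2})$. So the correct grouping must be $(\rho+1)(s+t^{q^2})(s^{q^2}+t)=(s+t)(s^{q^2}+t^{q^2})+(s+t^{q^2})(s^{q^2}+t)$, and expanding gives $ss^{q^2}+st^{q^2}+ts^{q^2}+tt^{q^2}+ss^{q^2}+st+t^{q^2}s^{q^2}+t^{q^2}t = st+st^{q^2}+s^{q^2}t+s^{q^2}t^{q^2}+tt^{q^2}+t^{q^2}t = st+st^{q^2}+s^{q^2}t+s^{q^2}t^{q^2}=(s+s^{q^2})(t+t^{q^2})$, using $tt^{q^2}+t^{q^2}t=0$ and the cancellation of the $ss^{q^2}$ terms. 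This confirms the identity, and squaring completes the proof.

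The only mild obstacle is the characteristic-$2$ bookkeeping: one must remember throughout that $2=0$, that squaring is additive, and that several seemingly different groupings of the six degree-two monomials in $s,t,s^{q^2},t^{q^2}$ collapse to the same thing. None of this is deep, so I would present the computation compactly, probably in two displayed lines — one for $\widehat\rho=\rho/(\rho+1)^2=u^2+u$ and one for the factorization $(\rho+1)(s+t^{q^2})(s^{q^2}+t)=(s+s^{q^2})(t+t^{q^2})$ — and leave the routine expansion to the reader, exactly as the word \enquote{straightforward} in the statement invites.
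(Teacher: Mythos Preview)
Your verification is correct: the two identities really do follow from the characteristic-$2$ manipulations you describe, and the key step
\[
(\rho+1)(s+t^{q^2})(s^{q^2}+t)=(s+t)(s^{q^2}+t^{q^2})+(s+t^{q^2})(s^{q^2}+t)=(s+s^{q^2})(t+t^{q^2})
\]
is exactly the content of the lemma once one writes $\widehat\rho=\rho/(\rho+1)^2=AB/(A+B)^2$ with $A=(s+t)(s^{q^2}+t^{q^2})$ and $B=(s+t^{q^2})(s^{q^2}+t)$. The paper itself gives no proof at all: it simply labels the result \enquote{straightforward} and cites \cite[Lemma~5.1]{hx}, so your explicit computation is precisely the routine check that both sources leave to the reader.

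Two small clean-ups. First, your aside about $\Im\widehat\rho$ slightly undershoots: in fact $\rho(s,t)^{q^2}=\rho(s,t)$ (conjugation by $q^2$ just permutes the four linear factors), so $u=1/(\rho+1)$ already lies in $\Fqt$, and hence $u^2+u$ lands in $\T_0(q^2)$, which is what the paper actually uses after the lemma. Second, the stream-of-consciousness passage with the \enquote{wait} and the redundant $tt^{q^2}+t^{q^2}t$ should be pruned before any final write-up; the clean two-line version you sketch at the end is all that is needed.
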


%

Set $q=2^h$. For $r\in\{1,2\}$, 
let $\T_0(q^r)$ be the set of elements of $\Fqr$ with absolute trace zero:
\[
\T_0(q^r)=\left\{x \in \Fqr\,:\,\sum_{i=0}^{rh-1}{x^{2^i}} = 0\right\}.
\]
 In \cite{hx} Hollmann and Xiang consider the following sets to construct a 3-class association scheme:  
\[
\T_0=\T_0(q^2), \ \ \ \ \S_0^*=\T_0(q)\setminus\{0\},\ \ \ \ \S_1=\Fq\setminus\S_0.
\]
  Note that $\T_0=\{\alpha+\alpha^2:\alpha\in\Fqt\}$ as $q$ is even. By Lemma \ref{lem_1}, $\Im\,\widehat\rho$ is contained in $\T_0$.

%
\begin{theorem}{\em \cite{hx}}\label{th_1} On the set of the elliptic lines $\cE$ define the following relations:
\begin{itemize}
\item[$R_1$:] $(\bar\ell_{\s},\bar\ell_{\t})\in R_1$ if and only $\widehat\rho(\bar\ell_{\s},\bar\ell_{\t})\in\S_0^*$;
\item[$R_2$:]  $(\bar\ell_{\s},\bar\ell_{\t})\in R_2$ if and only  $\widehat\rho(\bar\ell_{\s},\bar\ell_{\t})\in\S_1$;
\item[$R_3$:] $(\bar\ell_{\s},\bar\ell_{\t})\in R_3$ if and only  $\widehat\rho(\bar\ell_{\s},\bar\ell_{\t})\in\T_0\setminus\Fq$.
\end{itemize}
 Then the pair $(\cE,\{R_i\}_{i=0}^{3})$, where $R_0$ is the identity relation, is a 3-class association scheme. 
\end{theorem}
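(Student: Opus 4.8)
The plan is to verify directly that $(\cE,\{R_i\}_{i=0}^3)$ satisfies the axioms of a symmetric association scheme, the only nontrivial requirement being the existence of the intersection numbers $p_{ij}^k$. The approach rests on the observation that the relations are defined through the single symmetric function $\widehat\rho$, whose image lies in $\T_0=\T_0(q^2)$ by Lemma \ref{lem_1}. So I would first set up the finer partition of $\cE\times\cE$ induced by the value of $\widehat\rho$: for each $c\in\T_0$ let $R_c=\{(\bar\ell_\s,\bar\ell_\t):\widehat\rho(\s,\t)=c\}$, together with the diagonal $R_0$. The relations $R_1,R_2,R_3$ of the statement are then unions of the $R_c$ over the three sets $\S_0^*$, $\S_1$, $\T_0\setminus\Fq$; hence it suffices to show that the finer configuration $(\cE,\{R_0\}\cup\{R_c\}_{c\in\T_0})$ is already an association scheme and that the fusion along this particular partition of the index set is admissible (the Bannai--Muzychuk criterion, or here simply the fact that the fusion of relations with a common stabilizing group is again a scheme).

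The key step is to exhibit a transitive group $G$ acting on $\cE$ that preserves each $R_c$, and to compute enough of the orbit structure to read off the intersection numbers. Identifying $\cE$ with the set of unordered pairs $\t=\{t,t^{q^2}\}\subset\Fqf\setminus\Fqt$ via $\xi$, the group $\PGL(2,q^2)$ acts on $\Fqf\cup\{\infty\}$ preserving $\Fqf\setminus\Fqt$ (it commutes with the involution $x\mapsto x^{q^2}$ since its coefficients lie in $\Fqt$), hence acts on $\cE$; I would check that $\rho(s,t)$, and therefore $\widehat\rho(\s,\t)$, is invariant under the simultaneous action of $\PGL(2,q^2)$ on the pair $(s,t)$ — this is a cross-ratio–type computation, the map $\rho$ being essentially the cross-ratio of the four points $s,t,s^{q^2},t^{q^2}$ up to the symmetrization in \eqref{eq_2}. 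Transitivity of $\PGL(2,q^2)$ on $\cE$ follows from its 3-transitivity on $\Fqf\cup\{\infty\}$, which lets one move any elliptic line to a fixed one (e.g. the pair $\{\omega,\omega^{q^2}\}$ for a fixed generator of $\Fqf$ over $\Fqt$). Once $G=\PGL(2,q^2)$ is seen to be transitive on $\cE$ and to fix every $R_c$ setwise, the stabilizer $G_{\bar\ell}$ of a point acts on $\cE$, its orbits refine the $R_c$, and the scheme axioms — in particular well-definedness of $p_{ij}^k$ — follow by the standard argument that for a fixed $(\bar\ell_\s,\bar\ell_\t)\in R_k$ the number $|\{\bar\ell_\u : (\bar\ell_\s,\bar\ell_\u)\in R_i,\ (\bar\ell_\u,\bar\ell_\t)\in R_j\}|$ is constant on $G$-orbits, hence constant on $R_k$.

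The main obstacle I anticipate is the counting that shows exactly \emph{three} classes result, i.e.\ that the partition $\{\S_0^*,\S_1,\T_0\setminus\Fq\}$ of $\Im\widehat\rho$ is the coarsest one compatible with the scheme structure and that no further degeneracy collapses or splits the classes. Concretely one must show: (i) for $(\bar\ell_\s,\bar\ell_\t)$ with $\widehat\rho(\s,\t)\in\Fq$, whether $\widehat\rho(\s,\t)$ lies in $\S_0^*$ or in $\S_1$ is detected by the absolute trace $\Tr_{\Fq/\Ft}$, which by Lemma \ref{lem_1} equals $\Tr_{\Fq/\Ft}\!\big(\tfrac{1}{\rho+1}\big)+\Tr_{\Fq/\Ft}\!\big(\tfrac{1}{\rho+1}\big)$ or, via the quadratic relation there, is governed by whether $x^2+x=\widehat\rho(\s,\t)$ is solvable over $\Fq$ versus only over $\Fqt$; and (ii) the resulting three numbers of pairs $\bar\ell_\u$ in the definition of $p_{ij}^k$ are indeed independent of the chosen pair in $R_k$. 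Part (i) is the genuinely arithmetic heart and follows the computations in \cite[Section 5]{hx}; part (ii) is then automatic from the transitivity argument above. I would organize the write-up so that the group-theoretic reduction (transitivity of $\PGL(2,q^2)$, invariance of $\widehat\rho$) is dispatched first and cleanly, leaving only the trace/solvability bookkeeping, which I would cite from \cite{hx} rather than reprove in full.
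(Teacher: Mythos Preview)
The paper does not supply its own proof of Theorem~\ref{th_1}; the result is quoted from \cite{hx}, as the citation in the theorem heading indicates, and the paper only records the first eigenmatrix~\eqref{eq_3} afterwards. Remark~\ref{rem_2} sketches exactly the mechanism you describe: the $3$-class scheme is obtained as a fusion of the $(q^2/2-1)$-class Schurian scheme arising from the orbitals of $\PGL(2,q^2)$ on~$\cE$, with classes indexed by $\{\lambda,\lambda^{-1}\}$ where $\lambda=\rho(s,t)$. Your parametrisation by $c=\widehat\rho(\s,\t)$ is equivalent, since $\widehat\rho=1/(\rho+\rho^{-1})$ gives a bijection between such pairs and its image in~$\T_0$.

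One logical slip to tighten: you argue that $G=\PGL(2,q^2)$ \emph{preserves} each $R_c$ and then conclude that $p_{ij}^k$ is constant on $R_k$ because it is constant on $G$-orbits. That inference requires $G$ to be \emph{transitive} on each $R_c$ (so that the $R_c$ are precisely the orbitals), not merely to stabilise them setwise; preservation alone allows the orbitals to strictly refine the $R_c$, and then constancy on orbits does not propagate upward. The content of the theorem quoted in Remark~\ref{rem_2} is exactly that the $R_{\{\lambda,\lambda^{-1}\}}$ \emph{are} the orbitals, so the fix is to invoke that statement (or prove it directly: $\PGL(2,q^2)$ is transitive on ordered $4$-tuples with prescribed cross-ratio, hence on ordered pairs of elliptic lines with prescribed $\rho$ up to inversion). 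Once that is in place, the passage to three classes is a genuine fusion and still needs checking (Bannai--Muzychuk, or the eigenmatrix computation), which you correctly flag and defer to~\cite{hx}.
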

The first eigen-matrix of the scheme is
\begin{equation}\label{eq_3}
P=\begin{pmatrix}
1 & (q-2)(q^2+1)/2 & q(q^2+1)/2 & q(q-2)(q^2+1)/2\\[.05in]
1 & -(q-1)(q-2)/2 & -q(q-1)/2 & q(q-2)\\[.05in]
1 & -(q^2-q+2)/2 & q(q+1)/2 & -q\\[.05in]
1 & q-1 & 0 & -q
\end{pmatrix};
\end{equation}
see \cite[Section 7]{hx}.

\begin{remark}\label{rem_1}
{\em By identification (\ref{eq_1}), the set $\cE$ may be replaced by the set $\cX=\{\t=\{t,t^{q^2}\}:t \in\Fqf\setminus\Fqt\}$ and the relations $R_i$, $i=1,2,3$, replaced by
\begin{itemize}
\item[$R_1'$:] $(\s,\t)\in R_1'$ if and only $\widehat\rho(\s,\t)\in\S_0^*$;
\item[$R_2'$:]  $(\s,\t)\in R_2'$ if and only  $\widehat\rho(\s,\t)\in\S_1$;
\item[$R_3'$:] $(\s,\t)\in R_3'$ if and only  $\widehat\rho(\s,\t)\in\T_0\setminus\Fq$;
\end{itemize}
here $\widehat\rho(\s,\t)$ is the quantity defined in (\ref{eq_2}). 
Hence, $(\cX,\{R_i'\}_{i=0}^{3})$ is an  association scheme isomorphic to  $(\cE,\{R_i\}_{i=0}^{3})$.}
\end{remark}

\begin{remark}\label{rem_2}
\emph{Actually, the scheme $(\cX,\{R_i'\}_{i=0}^{3})$ arises as a fusion of the one given by  the following result \cite{hx}.}

{\bf Theorem.}  Under the identification $\xi$, the action of $\PGL(2,q^2)$ on $\cE\times\cE$ gives rise to an association scheme on $\cX$ with $q^2/2-1$ classes $R_{\{\lambda,\lambda^{-1}\}}$, $\lambda\in\Fqt\setminus\{0,1\}$, where 
$(\s,\t)\in R_{\{\lambda,\lambda^{-1}\}}$ if and only if $\{\rho(s,t),\rho(s,t)^{-1}\}=\{\lambda,\lambda^{-1}\}$.
\end{remark}

%

\subsection{The Penttila-Williford association schemes}\label{sec_3}

Up to isometries, the vector space $V(4,q^2)$ has precisely one non-degenerate hermitian form, and its Witt index is 2. As usual,  $H(3,q^2)$ denotes the  unitary polar space of rank 2 defined by it. A {\em point} (resp. {\em line}) of $H(3,q^2)$ is a 1-dimensional (resp. 2-dimensional)  subspace in $H(3,q^2)$.

Assume $q$ even for the rest of the current section.
In  $V(4,q^2)$ there is  a 4-dimensional $\Fq$-vector space $\widehat V$ such that the  restriction of the hermitian form on it induces a non-degenerate alternating form $\widehat b$ which defines a symplectic polar space $W(3,q)$ of rank 2 of $\PG(\widehat V)$ \cite{ck}. In addition,  $\widehat b$ is  the polar of a non-degenerate quadratic form $\widehat Q$ of Witt index 1, whose set of singular point is denoted by $Q^-(3,q)$. 
By $\widehat\cW$ (resp. $\widehat\cQ$) we denote the set of all the totally isotropic (resp. singular) subspaces of $W(3,q)$ (resp. $Q^-(3,q)$)  extended over $\Fqt$.
As a consequence,  for every point of $H(3,q^2)$ not in $\widehat\cW$ there are exactly $q$ lines of $H(3,q^2)$  disjoint from  $\widehat\cW$ and  one in $\widehat \cW$.  Note that $\widehat\cW$ is an embedding of $W(3,q)$ in $H(3,q^2)$.

The following definition was introduced in \cite{pw}. A {\em relative hemisystem of $H(3,q^2)$ with respect to} $W(3,q)$ is a set $\cH$ of lines of $H(3,q^2)$ disjoint from $\widehat\cW$ such that every point of $H(3,q^2)$ not in  $\widehat\cW$ lies on exactly $q/2$ lines of $\cH$.
For any given line $l$ of $H(3,q^2)$ disjoint from $\widehat\cW$, let $\cS_l$ denote the set of lines of $H(3,q^2)$ which meet both $l$ and $\widehat\cW$. We stress the fact that  $\cS_l$ consists of the lines of $\widehat \cW$ that extend  elements of a regular spread of $W(3,q)$\footnote{A spread of $W(3,q)$ in $\PG(3,q)$ is a set $\cS$ of totally isotropic lines which partition the pointset of $\PG(3,q)$.
 $\cS$ is {\em regular} if for any three distinct lines of $\cS$ there is a set $R$ of $q +1$ lines of $\cS$ containing them, with the following property: any line of $\PG(3,q)$ intersecting three lines in $R$ meets all the lines of $R$.}, and refer to $\cS_l$ as the {\em spread  subtended by} $l$.
 

\begin{theorem}{\em \cite[Theorem 4]{pw}} \label{th_2} Let $\cH$ be a relative hemisystem  of $H(3,q^2)$ with respect to $W(3,q)$.  Then a primitive $Q$-polynomial 3-class   association scheme can be constructed on $\cH$ by the defining the following relations: 
\begin{itemize}
\item[$\widetilde R_1$:] $(l,m)\in \widetilde R_1$ if and only $|l\cap m|=1$;
\item[$\widetilde R_2$:]  $(l,m)\in \widetilde R_2$ if and only $l\cap m=\emptyset$ and $|\cS_l\cap\cS_m|=1$;
\item[$\widetilde R_3$:] $(l,m)\in \widetilde R_3$ if and only $l\cap m=\emptyset$ are $|\cS_l\cap\cS_m|=q+1$.
\end{itemize}
\end{theorem}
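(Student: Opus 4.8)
The route I would take is the one behind \cite[Theorem 2]{pw}, which is a reversal of the \enquote{extended $Q$-bipartite double} construction of \cite{mmw}. First one checks that $\widetilde R_1,\widetilde R_2,\widetilde R_3$ (together with the diagonal) really do partition $\cH\times\cH$: each defining condition is visibly symmetric in $l$ and $m$, and since two distinct lines of a generalized quadrangle meet in at most one point, the only point to verify is that \emph{opposite} lines $l,m$ of $\cH$ always satisfy $|\cS_l\cap\cS_m|\in\{1,q+1\}$. For this I would use the two facts recorded just before the statement: $\cS_l$ is the $\Fqt$-extension of a \emph{regular} spread of $W(3,q)$, and every external point of $H(3,q^2)$ lies on a unique line of $\widehat\cW$. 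A common element of $\cS_l$ and $\cS_m$ is a transversal of $l$ and $m$ contained in $\widehat\cW$; counting the transversals of two opposite lines in the quadrangle and combining this with the regulus structure of regular spreads (three shared lines force a shared regulus of size $q+1$) pins the intersection number down to $1$ or $q+1$. This count is cleanest in the orthogonal model $Q^-(5,q)$, where $l$ and $m$ become points and $\cS_l,\cS_m$ become subspaces whose meet is controlled by the quadratic form.

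For the association scheme axioms — well-definedness of the intersection numbers $p_{ij}^k$ — a Schurian (orbit) argument is unavailable, because the conclusion has to hold for \emph{every} relative hemisystem, including ones with small automorphism group; the regularity must be inherited from the ambient geometry. The plan is to realise $(\cH,\{\widetilde R_i\}_{i=0}^3)$ as a \enquote{half} (a section of the dual-bipartite relation) of a $Q$-bipartite $Q$-polynomial parent scheme $\mathfrak Y$ carried by the full set of $q^2(q^2-1)$ lines of $H(3,q^2)$ disjoint from $\widehat\cW$ — equivalently, by the associated generalized quadrangle in the $Q^-(5,q)$ picture — and then to invoke \cite[Theorem 2]{pw}: applied to a $Q$-bipartite $Q$-polynomial scheme on such a quadrangle, that result returns a primitive $Q$-polynomial scheme on each half, and one checks that its relations are exactly $\widetilde R_1,\widetilde R_2,\widetilde R_3$. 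On the Krein side this is the \enquote{splitting of one relation in two} mentioned in the Introduction: the \enquote{non-collinear} relation breaks into the cases $|\cS_l\cap\cS_m|=1$ and $|\cS_l\cap\cS_m|=q+1$. The relative-hemisystem hypothesis is precisely what makes $\mathfrak Y$ an extended $Q$-bipartite double, equivalently what makes the chosen half equitable, so that the reversal applies.

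The main obstacle is the verification of the hypotheses of \cite[Theorem 2]{pw} for the parent scheme $\mathfrak Y$: that $\mathfrak Y$ is $Q$-polynomial and $Q$-bipartite (the designated Krein parameter vanishes) and that halving yields a genuinely \emph{primitive} scheme — no zero valency or multiplicity forcing $P$- or $Q$-imprimitivity — that still carries a $Q$-polynomial ordering of its eigenspaces. Concretely this amounts to computing the second eigenmatrix, i.e.\ the Krein array, of $\mathfrak Y$ on the lines disjoint from $\widehat\cW$; from it the $3\times 3$ parameters of the subscheme follow, recovering the first eigenmatrix $P$ of (\ref{eq_3}) and showing that the corresponding $Q=|\cH|\,P^{-1}$ has no zero entry and the sign pattern certifying $Q$-polynomiality. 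Getting the parent parameters exactly right is where the geometry does the work — the regular-spread structure of the $\cS_l$ and the interplay of $H(3,q^2)$ with its embedded $W(3,q)$ — and it is most transparent after transporting everything to $Q^-(5,q)$ via the Klein correspondence, which is exactly the representation developed in Section~3.
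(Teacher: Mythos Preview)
The paper does not prove this statement at all: Theorem~\ref{th_2} is simply quoted from \cite[Theorem~4]{pw} and used as input. Your outline is therefore not being compared against a proof in the paper, but it does match the paper's own summary in the Introduction of how the result is obtained in \cite{pw}: one applies \cite[Theorem~2]{pw}, the reversal of the extended $Q$-bipartite double construction of \cite{mmw}, to a $Q$-polynomial $Q$-bipartite parent scheme on the full set of lines disjoint from $\widehat\cW$, and the relative-hemisystem condition is exactly what is needed for the halving to go through. Your identification of the remaining work --- checking the Krein/$Q$-bipartite hypotheses for the parent scheme and reading off the eigenmatrix~(\ref{eq_3}) --- is accurate, and the observation that the $Q^-(5,q)$ model is the cleanest venue for the $|\cS_l\cap\cS_m|\in\{1,q+1\}$ dichotomy is borne out by Section~4 of the paper, where precisely that computation (degenerate versus non-degenerate restriction of $\widetilde Q$ to a plane) is carried out for a different purpose.
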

Let $\PO^-(\widehat V)$ be the stabilizer of $\widehat\cQ$  in the projective  unitary group $\PGU(4,q^2)$. By looking at the action of the commutator subgroup $\mP\Omega^-(\widehat V)$ of $\PO^-(\widehat V)$ on the lines of $H(3,q^2)$,   the following result was proved in \cite{pw}.
\begin{theorem}\label{th_3}
 $\mP\Omega^-(\widehat V)$ has two orbits on the lines of $H(3,q^2)$ disjoint from $\widehat\cW$, and each  orbit    is a relative hemisystem with respect to $W(3,q)$.
\end{theorem}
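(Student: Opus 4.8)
The plan is to study the permutation action of $\mP\Omega^-(\widehat V)$ on the lines of $H(3,q^2)$ group-theoretically, after first identifying this group concretely. Since $\widehat Q$ is a non-degenerate quadratic form of Witt index $1$ on the $4$-dimensional $\Fq$-space $\widehat V$, an element of $\PGU(4,q^2)$ stabilising $\widehat\cQ$ is, up to scalars, the $\Fqt$-linear extension of a similarity of $(\widehat V,\widehat Q)$, whence $\PO^-(\widehat V)\cong\mathrm{PGO}^-(4,q)$. For $q$ even this group is $\mP\Omega^-(4,q).2$, the index-$2$ quotient being recorded by the Dickson invariant, and $\mP\Omega^-(4,q)\cong\PSL(2,q^2)$ is simple; as $\PSL(2,q^2)$ is perfect, the commutator subgroup $\mP\Omega^-(\widehat V)$ equals $\mP\Omega^-(4,q)\cong\PSL(2,q^2)$ and has index $2$ in $\PO^-(\widehat V)$. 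The next step is bookkeeping: using the quoted fact that every point of $H(3,q^2)$ off $\widehat\cW$ lies on exactly $q$ lines disjoint from $\widehat\cW$ (and that such a line meets $\widehat\cW$ in no point), a double count of incident pairs gives that there are exactly $q^2(q^2-1)$ lines of $H(3,q^2)$ disjoint from $\widehat\cW$, whereas a relative hemisystem consists of exactly $q^2(q^2-1)/2$ of them. Thus, numerically, a relative hemisystem is \textquote{half} of the disjoint lines, and the theorem reduces to proving: (A) $\mP\Omega^-(\widehat V)$ has exactly two orbits on the disjoint lines, each necessarily of size $q^2(q^2-1)/2$; and (B) each of these orbits meets every pencil of $q$ disjoint lines through a fixed point off $\widehat\cW$ in exactly $q/2$ lines.

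For (A) I would work through the line stabiliser. Fix coordinates so that one disjoint line $l_0$ subtends a prescribed regular spread $\cS_{l_0}$ of $W(3,q)$. A collineation fixing $l_0$ also fixes $\cS_{l_0}$, so $\Stab_{\PO^-(\widehat V)}(l_0)$ is contained in the stabiliser of that regular symplectic spread, and a direct computation should give $|\Stab_{\PO^-(\widehat V)}(l_0)|=2(q^2+1)$: it is the normaliser of a non-split torus $C_{q^2+1}$ — this torus fixing $l_0$ because it fixes $\cS_{l_0}$ together with the extra datum that pins $l_0$ down among the lines subtending $\cS_{l_0}$ — extended by an involution inverting the torus. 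In particular $\PO^-(\widehat V)$ is transitive on the $q^2(q^2-1)$ disjoint lines. The decisive point is then that this stabiliser already lies inside $\mP\Omega^-(\widehat V)$, i.e. that $|\Stab_{\mP\Omega^-(\widehat V)}(l_0)|=2(q^2+1)$ too: the torus $C_{q^2+1}$ lies in $\mP\Omega^-(\widehat V)$ automatically, since $q^2+1$ is odd and the Dickson invariant vanishes on elements of odd order, while one must check by an explicit Dickson-invariant computation that the inverting involution lies in $\mP\Omega^-(\widehat V)$ as well. Granting this, and since the line stabilisers are all conjugate under the normal subgroup $\mP\Omega^-(\widehat V)$, every $\mP\Omega^-(\widehat V)$-orbit on disjoint lines has size $q^2(q^4-1)/(2(q^2+1))=q^2(q^2-1)/2$; hence there are exactly two, and the non-trivial coset of $\mP\Omega^-(\widehat V)$ in $\PO^-(\widehat V)$ interchanges them.

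For (B) I would first show that $\mP\Omega^-(\widehat V)\cong\PSL(2,q^2)$ is transitive on the $q(q^4-1)$ points of $H(3,q^2)$ off $\widehat\cW$. For such a point $P$, the line $\langle P,P^\sigma\rangle$ — with $\sigma$ the Baer involution having fixed space $\widehat V$ — is forced to be the unique line of $\widehat\cW$ through $P$; its Baer subline $m$ is a totally isotropic line of $W(3,q)$, and $m$ meets the elliptic quadric $Q^-(3,q)$ in exactly one point $N$. Thus $P\mapsto(N,m)$ sends $P$ to a tangent flag of $Q^-(3,q)$, the fibre over $(N,m)$ being the $q^2-q$ points of $\overline{m}\setminus m$. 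Since $\mP\Omega^-(\widehat V)$ acts on $Q^-(3,q)$ as $\PSL(2,q^2)$ on $\PG(1,q^2)$ — in particular transitively on points, with the stabiliser of a point transitive on the $q+1$ tangent lines through it — the claim reduces to checking that the stabiliser of a tangent flag $(N,m)$, of order $q^2(q-1)$, acts transitively on the $q^2-q$ points of $\overline{m}\setminus m$; here the kernel of this action is precisely the elementary abelian group of order $q$ consisting of the isometries fixing $m$ pointwise that lie in $\mP\Omega^-(\widehat V)$, and the quotient, of order $q^2-q$, acts regularly. Combining this point-transitivity with (A): for a fixed point $P$ off $\widehat\cW$, the number $a$ of lines of a given orbit $\cO_1$ through $P$ does not depend on $P$, and the double count $a\cdot q(q^4-1)=(q^2+1)\,|\cO_1|=(q^2+1)\,q^2(q^2-1)/2$ forces $a=q/2$. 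Therefore each of the two orbits is a relative hemisystem of $H(3,q^2)$ with respect to $W(3,q)$, which is the assertion of the theorem.

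The main obstacle is the parity bookkeeping in step (A): distinguishing the true stabiliser — a dihedral group $C_{q^2+1}\rtimes C_2$ lying \emph{inside} the simple subgroup $\mP\Omega^-(\widehat V)$ — from the group of the same order obtained by adjoining an outer element of $\PO^-(\widehat V)$; only the first possibility yields the theorem, and separating the two cases comes down to an honest Dickson-invariant computation with an explicitly coordinatised line $l_0$, in a setting where the characteristic-$2$ phenomena (every involution is unipotent, and $\mO^-(4,q)/\mP\Omega^-(4,q)$ is detected by the Dickson invariant rather than by a determinant) make the argument delicate. A secondary, more routine difficulty is establishing, in (B), the transitivity of $\mP\Omega^-(\widehat V)$ on the tangent flags of $Q^-(3,q)$ and on the $q^2-q$ points of $\overline{m}\setminus m$.
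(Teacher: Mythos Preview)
The paper does not prove this theorem itself---it is quoted from \cite{pw}---but Section~\ref{sec_4} contains the ingredients for an independent argument for the orbit claim: having identified $\mP\Omega^-(\widehat V)$ with $\PSL(2,q^2)$ via the map $\chi$, the Proposition there exhibits, for each $t\in\Fqf\setminus\Fqt$, an explicit $g\in\PSL(2,q^2)$ with $\chi(g)$ carrying the fixed line $M_{\{\omega,\omega^{q^2}\}}$ to $M_\t$, so that $\{m_\t\}$ is a single $\mP\Omega^-(\widehat V)$-orbit of size $(q^4-q^2)/2$, exactly half the disjoint lines by Lemma~\ref{lem_2} and the count you also make; the semilinear involution $\tau$ then furnishes the other half. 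Your route is genuinely different: rather than parametrising an orbit explicitly and reading off its size, you bound the line stabiliser inside $\PO^-(\widehat V)$ and argue that it already lies in the index-$2$ subgroup $\mP\Omega^-(\widehat V)$. You have located the crux accurately---whether $\mP\Omega^-(\widehat V)$ has one orbit or two on disjoint lines comes down precisely to the Dickson invariant of the torus-inverting involution, and this is delicate in even characteristic---but the paper's explicit approach sidesteps the issue entirely, since the orbit size follows from the transitive action of $\PSL(2,q^2)$ on the $(q^4-q^2)/2$ unordered pairs $\{t,t^{q^2}\}$ by $\Fqt$-rational M\"obius maps, with no parity bookkeeping required. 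Your structural argument would transfer more readily to other configurations; the coordinate argument is shorter here. Your part (B), reducing the hemisystem property to point-transitivity plus a double count, is correct and clean.
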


We consider an association scheme $(\cH,\{\widetilde R_i\}_{i=0}^{3})$ as in Theorem \ref{th_2} by using the hemisystems from Theorem \ref{th_3}. As expected, the first eigen-matrix of the scheme  is precisely the matrix in (\ref{eq_3}). 

\section{The explicit construction of the relative hemisystem of Penttila-Williford}\label{sec_4}

Let $G$ and $H$ be groups acting on the sets $\Omega$ and $\Delta$, respectively. The two actions are said to be {\em permutationally isomorphic} if there exist a bijection $\theta: \Omega \rightarrow \Delta$ and an isomorphism $\chi: G \rightarrow H$ such that the following diagram commutes:

\begin{center}
\begin{tikzpicture}

\node (G) at (0, 1.9) {$G$};
\node (X) at (0.39, 1.9) {$\times$};
\node (Omega1) at (.78, 1.9) {$\Omega$};
\node (Omega2) at (3.3, 1.9) {$\Omega$};

\node (H) at (0, 0) {$H$};
\node (X) at (0.39, 0) {$\times$};
\node (Delta1) at (.78, 0) {$\Delta$};
\node (Delta2) at (3.3, 0) {$\Delta$};
\draw[->]   (Omega1) edge node [above] {$\phi$}  (Omega2) ;
\draw[->]   (Delta1) edge node [above] {$\tilde\phi$}  (Delta2) ;
\draw[->]   (G) edge node [left] {$\chi$}  (H) ;
\draw[->]   (Omega1) edge node [left] {$\theta$}  (Delta1) ;
\draw[->]   (Omega2) edge node [left] {$\theta$}  (Delta2) ;
\end{tikzpicture}
\end{center}
Here $\phi$ and $\tilde\phi$ are the maps defining the action of $G$ and  $H$ on $\Omega$ and $\Delta$, respectively.

Let $Q^-(3,q)$ be the orthogonal polar space  (of rank 1) defined by $\widehat Q$ on the  4-dimensional $\Fq$-vector space $\widehat V$ introduced in Section \ref{sec_3}. \\
It is known that $(\PSL(2,q^2),\PG(1,q^2))$ and $(\mathrm P\Omega^-(\widehat V),Q^-(3,q))$ are  permutationally isomorphic   for all prime powers $q$. For sake of completeness, we give an explicit description of the above isomorphism which is more suitable for our computation.

\comment{The vector space $V(2n,q)$ has precisely  two  (non-degenerate) quadratic forms, and they differ by their Witt-index, that is the dimension of their maximal totally singular subspaces; see \cite{taylor}. These dimensions are $n-1$ and $n$, and the quadratic form is {\em elliptic} or {\em hyperbolic}, respectively. In terms of the associated projective geometry $\PG(2n - 1, q)$, the kernel of the elliptic (resp. hyperbolic) quadratic form defines an {\em elliptic} (resp. {\em hyperbolic}) quadric of $\PG(2n-1, q)$. The vector space $V(2n+1,q)$, $q$ even,  has precisely  one  (non-degenerate) quadratic form, and its Witt index is $n$.  The quadratic form is called {\em parabolic}. In terms of the associated projective geometry $\PG(2n, q)$, its kernel defines a {\em parabolic} quadric of $\PG(2n, q)$. 
}

In $V(4,q^2)=\{(X_1,X_2,X_3,X_4): X_i\in\Fqt\}$, let $\widehat{V}$ be the set of all vectors $v=(\alpha,x^q,x,\beta)$ with $\alpha,\beta \in\Fq$, $x\in\Fqt$. With the usual sum and  multiplication by scalars from $\Fq$, $\widehat{V}$ is a 4-dimensional vector space over $\Fq$.

As usual we  identify $\PG(1,q^2)$ with $\Fqt\cup\{\infty\}$ and we consider the following injective map:
\begin{equation}\label{eq_7}
\begin{array}{rcccc}
\theta: & \Fqt\cup\{\infty\} & \longrightarrow  & \PG(\widehat V)\\[.02in]
        &  t & \mapsto     & \<(1,t^q,t,t^{q+1})\>\\[.02in]
        &  \infty & \mapsto     & \<(0,0,0,1)\>
\end{array}\ \ \ .     
\end{equation}

\begin{proposition}{\em \cite{cs}}
The image of $\theta$  is  an  orthogonal polar space  of rank 1 of  $\PG(\widehat V)$.
\end{proposition}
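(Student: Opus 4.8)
The plan is to verify directly that the image $\theta(\Fqt\cup\{\infty\})$ is the set of singular points of the quadratic form $\widehat Q$ on $\widehat V$, i.e. a rank-1 orthogonal polar space $Q^-(3,q)$. First I would fix an explicit model for $\widehat Q$: restricting the hermitian form of $H(3,q^2)$ (say the one with Gram matrix swapping coordinates $1\leftrightarrow 4$ and $2\leftrightarrow 3$, so that $h(v,v)=X_1^q X_4 + X_4^q X_1 + X_2^q X_3 + X_3^q X_2$) to vectors $v=(\alpha,x^q,x,\beta)$ with $\alpha,\beta\in\Fq$ and $x\in\Fqt$ gives $h(v,v)=\alpha\beta+\beta\alpha+x^{q^2}\cdot x^q\cdot(\text{conjugation bookkeeping})$; more precisely one gets a value lying in $\Fq$ of the shape $\widehat b(v,v) = 2\alpha\beta + \Tr_{q^2/q}(x^{q+1})$-type expression, and since $q$ is even the "diagonal" part is $\Tr_{q^2/q}(x^{q+1}) = x^{q+1}+x^{q(q+1)} = x^{q+1}+x^{q^2+q}$... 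I would instead directly \emph{define} $\widehat Q(\alpha,x^q,x,\beta) = \alpha\beta + N(x)$ where $N(x)=x^{q+1}$ is the norm $\Fqt\to\Fq$, check it is $\Fq$-quadratic, and check its polar form is the alternating form $\widehat b$ induced by the hermitian form (this is the content of the reference \cite{ck} quoted in Section \ref{sec_3}, so it may simply be cited).

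The core computation is then: $\theta(t)=\<(1,t^q,t,t^{q+1})\>$ is singular iff $\widehat Q(1,t^q,t,t^{q+1}) = 1\cdot t^{q+1} + N(t) = t^{q+1}+t^{q+1}=0$ in characteristic $2$, which holds identically; and $\theta(\infty)=\<(0,0,0,1)\>$ satisfies $\widehat Q(0,0,0,1)=0\cdot 1 + N(0)=0$. Hence every point in the image is singular. Conversely, I would show these are \emph{all} the singular points: a general point of $\PG(\widehat V)$ is $\<(\alpha,x^q,x,\beta)\>$, and $\widehat Q=0$ forces $\alpha\beta = N(x)$. If $\alpha\neq 0$ we may scale to $\alpha=1$, whence $\beta=N(x)=x^{q+1}$ and the point is $\theta(x)$; if $\alpha=0$ then $N(x)=0$, so $x=0$, and the point is $\<(0,0,0,\beta)\>=\<(0,0,0,1)\>=\theta(\infty)$. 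This exhausts the singular locus, so $\theta$ is a bijection onto it.

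Finally I would note that the form $\widehat Q$ has Witt index $1$: a totally singular subspace of dimension $\ge 2$ would give a projective line inside the quadric, but the singular points just enumerated number $q^2+1$, which is exactly $|Q^-(3,q)|$, the count for an elliptic (rank-1) quadric in $\PG(3,q)$; since a hyperbolic quadric $Q^+(3,q)$ has $(q+1)^2$ points, the form must be elliptic of Witt index $1$. Thus $\Im\theta$ is an orthogonal polar space of rank $1$, as claimed. I do not anticipate a serious obstacle here — the only mildly delicate point is getting the restricted hermitian form into the clean shape $\alpha\beta+N(x)$ (equivalently, trusting the embedding statement from \cite{ck}); once the form is pinned down, the identity $t^{q+1}=t^{q+1}$ does all the work, and the bijectivity and Witt-index claims are immediate counting arguments.
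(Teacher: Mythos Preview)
Your proposal is correct and lands on the same quadratic form as the paper, $\widehat Q(\alpha,x^q,x,\beta)=\alpha\beta\pm x^{q+1}$, but the two proofs verify ``rank~1 orthogonal polar space'' by different routes. The paper argues algebraically: it checks $\Rad(\widehat b)=0$ by a polynomial-degree argument, then shows Witt index~$1$ by taking the singular vector $u=(1,0,0,0)$, computing $u^\perp$, and observing that $u^\perp\cap\ker\widehat Q=\langle u\rangle$, so no singular $2$-space exists. You instead parametrize the entire singular locus explicitly (your converse step, which the paper merely asserts) and then identify the quadric type by the point count $q^2+1$. Your route is a bit more concrete and gives bijectivity of $\theta$ for free; the paper's route is cleaner on the structural side and avoids case-checking against degenerate quadrics.

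Two small remarks. First, the proposition in the paper is stated (and proved, with the minus sign $\alpha\beta-x^{q+1}$) for \emph{all} prime powers $q$; the assumption ``$q$ even'' is only imposed later in the section. Your choice of $+N(x)$ and the line ``$t^{q+1}+t^{q+1}=0$ in characteristic~$2$'' unnecessarily restricts you to even $q$; with the sign $-$ the computation $1\cdot t^{q+1}-t\cdot t^q=0$ is identically zero in every characteristic. Second, your counting argument ``$q^2+1$ points, hence elliptic'' implicitly assumes non-degeneracy: strictly you should also note that no degenerate quadric in $\PG(3,q)$ has exactly $q^2+1$ points (the cone, the line, the plane pair all have different counts), which is true but worth a sentence.
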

\begin{proof}
Let $Q$ be the quadratic form on $V(4,q^2)$ defined by
\[
Q(\X)=X_1X_4-X_2X_3,
\]
which has $b(\X,\Y)=X_1Y_4 +X_4Y_1 -X_2Y_3-X_3Y_2$ as the associated non-degenerate bilinear  form. The restriction $\widehat Q=Q|_{\widehat V}$ is the quadratic form given by
\[
\widehat Q(v)=\alpha\beta-x^{q+1},
\]
which has 
\begin{equation}\label{eq_12}
\widehat b(v,v')=\alpha\beta'+\beta\alpha'-x x'^q-x^qx'
\end{equation}
as the associated bilinear form. Let $v=(\alpha,x^q,x,\beta)\in\Rad(\widehat V)$, that is $\widehat b(v,v')=0$, for all $v'\in\widehat V$. If $\alpha'=\beta'=0$, a necessary condition for $v\in\Rad(\widehat V)$ is
\[
x^qx'+x x'^q=0,
\]
for all $x'\in\Fqt$. This shows that the polynomial in $x'$ of degree $q$ on the left hand side has at least $q^2$ roots. Therefore, it  must be the zero polynomial, and $x=0$. We repeat the above argument for $\alpha'=x'=0$ and  for $x'=\beta'=0$ to show that $v=0$. This yields that $\widehat b$, and hence $\widehat Q$, is non-degenerate.  Let $u$ be a singular vector for $\widehat Q$. Without loss of generality we may take $u=(1,0,0,0)$. Therefore,  the subspace $U=\{v\in\widehat V:\widehat b(v,u)=0\}$ coincides with $\{(\alpha,x^q,x,0):\beta \in\Fq,x\in\Fqt\}$. It is easily seen that $U\cap \ker \widehat Q=\{\alpha u:\alpha \in \Fq\}$. Thus, $\widehat Q$ is a quadratic form of Witt index 1 giving rise to the orthogonal polar space 
\[
Q^-(3,q)=\{\<(1,t^q,t,t^{q+1})\>:t\in\Fqt\}\cup \{\<(0,0,0,1)\>\}, 
\] 
which is precisely $\Im\,\theta$.
\end{proof}
Let $\chi$ be the monomorphism defined by
\[
\begin{array}{rcccc}
\chi: & \SL(2,q^2)& \longrightarrow & \SL(4,q^2) \\
      &    g &\mapsto & g\otimes g^q
\end{array},
\]
where $\otimes$ is the Kronecker product and  $g^q$ denotes the matrix $g$ with  its entries raised to the $q$-th power. It is straightforward to check that $\chi(g)$ is a $\widehat Q$-isometry, for every $g\in\SL(2,q^2)$. 
 Therefore, $\chi$ can be regarded as  a monomorphism from $\PSL(2,q^2)$ to  $\PO^-(\widehat V)$. It is actually an isomorphism from $\PSL(2,q^2)$ to  $\mP\Omega^-(\widehat V)$, as it will be shown below.

Let $t_a$ be the transvection in $\SL(2,q^2)$ with  matrix  $\begin{pmatrix} \ 1 & 0\ \\ \ a & 1 \ \end{pmatrix}$, for some $a\in\Fqt^*$. The isometry $\chi(t_a)$ maps  $(\alpha,x^q,x,\beta)$ to $(\alpha,a^q\alpha+x^q,a\alpha + x,a^{q+1}\alpha+ax^q+a^qx+\beta)$. Its restriction on the hyperplane of all  $\widehat b$-orthogonal vectors to $u=(0,0,0,1)$   is the map
\[
\eta_{u,y}(w)=w+\widehat b(w,y)u,
\]
where $y=(0,-a^q,-a,0)$. This yields that $\chi(t_a)$  is actually the unique Siegel transformation $\rho_{u,y}$ which extends $\eta_{u,y}$ \cite[Theorem 11.18]{taylor}. By using \cite[Theorem 11.19 (ii)]{taylor} it is possible to show that as $a$ varies in $\Fqt^*$,  $\rho_{u,y}$  describes all the Siegel transformations centered at $u$. 

Every transvection $g$ is conjugate in $\SL(2,q^2)$ to a transvection of type $t_a$. This implies that $\chi(g)$ is also a Siegel transformation  \cite[Theorem 11.19 (iii)]{taylor}. Therefore, $\chi$ gives rise to a  bijection from the set of all transvections in $\SL(2,q^2)$  to all Siegel transformations of $\widehat V$. Since transvections generate $\SL(2,q^2)$, and Siegel transformations generate $\Omega^-(\widehat V)$, we achieve $\chi(\PSL(2,q^2))\le\mP\Omega^-(\widehat V)$. As  $|\PSL(2,q^2)|=|\mP\Omega^-(\widehat V)|$, $\chi$ is actually the desired  isomorphism.  It is a matter of fact that the diagram

\begin{center}
\begin{tikzpicture}

\node (G) at (0.1, 1.9) {$\PSL(2,q^2)$};
\node (X) at (1.3, 1.9) {$\times$};
\node (Omega1) at (2.47, 1.9) {$\Fqt\cup\{\infty\}$};
\node (Omega2) at (6.3, 1.9) {$\Fqt\cup\{\infty\}$};
\node (H) at (0.1, 0) {$\mP\Omega^-(\widehat V)$};
\node (X) at (1.29, 0) {$\times$};
\node (Delta1) at (2.47, 0) {$Q^-(3,q)$};
\node (Delta2) at (6.3, 0) {$Q^-(3,q)$};
\draw[->]   (Omega1) edge node [above] {$\phi$}  (Omega2) ;
\draw[->]   (Delta1) edge node [above] {$\tilde\phi$}  (Delta2) ;
\draw[->]   (G) edge node [left] {$\chi$}  (H) ;
\draw[->]   (Omega1) edge node [left] {$\theta$}  (Delta1) ;
\draw[->]   (Omega2) edge node [left] {$\theta$}  (Delta2) ;
\end{tikzpicture}
\end{center}
commutes.

For the rest of this section, assume $q$ is even.
The bilinear form $\widehat b$ defined by (\ref{eq_12}) is a (non-degenerate) alternating form on $\widehat V$.  
Let $h$ be  the  non-degenerate hermitian form on $V(4,q^2)$ given by
\[
h(\X,\Y)=X_1Y_4^q+X_2Y_2^q+X_3 Y_3^q+X_4 Y_1^q, 
\]
with   associated  unitary polar space $H(3,q^2)$.  It is evident that $h|_{\widehat V}=\widehat b$. Therefore, the  symplectic polar space $W(3,q)$ defined by $\widehat b$, as well as the orthogonal polar space $Q^-(3,q)$, can be embedded  in $H(3,q^2)$ by extending the scalars, so getting $\widehat\cW$ and $\widehat\cQ$ introduced in Section \ref{sec_3}. This also implies that   $\mP\Omega^-(\widehat V)$ is a subgroup of the projective symplectic group $\PSp(\widehat V)$ which is in turn a subgroup of the projective unitary group $\PGU(4,q^2)$.

The semilinear involutorial transformation $\tau$ of $V(4,q^2)$ given by
\[
\begin{array}{rccc}
\tau: & V(4,q^2) & \longrightarrow  & V(4,q^2) \\
      & (X_1,X_2,X_3,X_4) &\mapsto  & (X_1^q,X_3^q,X_2^q,X_4^q).
\end{array}
\]
fixes $H(3,q^2)$ and  acts as the identity on $\widehat\cW$. 

If we embed  $V(4,q^2)$ in $V(4,q^4)$ by extending the scalars, then $\PG(3,q^4)$  embeds  $\PG(\widehat V)$. Therefore,  $\theta$ defined by (\ref{eq_7}) can be naturally thought as the restriction of the following map:
\[
\begin{array}{ccccccc}
\theta:  & \Fqf\cup\{\infty\} & \longrightarrow   & \PG(3,q^4) \\[.1in]
       & t                  & \longmapsto    & \<(1,t^q,t,t^{q+1})\>\\[.05in]
       & \infty             & \longmapsto    &  \<(0,0,0,1)\>.
\end{array}
\]

%
%
%
Note that, for any $t\in\Fqf\setminus\Fqt$,  $\theta(t)$ is not the span of a vector of $V(4,q^2)$. Moreover,   
\[
\theta(t^{q^2})=\<(1,t^{q^3},t^{q^2},t^{q^3+q^2})\>=\theta(t)^{\tau^2}\neq\theta(t).
\]
%
For each $t\in\Fqf\setminus\Fqt$, we  associate the pair $\t=\{t,t^{q^2}\}$  with the line $M_{\t}$ of $\PG(3,q^4)$ spanned by $\theta(t)$ and $\theta(t^{q^2})$, which is distinct from $M_{\t}^\tau$.
\begin{lemma}\label{lem_2}
For each pair $\t$,  $M_{\t}\cap V(4,q^2)$ is a line of $H(3,q^2)$, say $m_\t$, which is disjoint from $\widehat\cW$.
\end{lemma}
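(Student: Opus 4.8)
The plan is to show two things about the line $M_{\t}$ of $\PG(3,q^4)$ spanned by $\theta(t)$ and $\theta(t^{q^2})$: first, that $M_{\t}\cap V(4,q^2)$ is genuinely a line of $\PG(3,q^2)$ (i.e.\ a $2$-dimensional $\Fqt$-subspace, not smaller); second, that this line is totally isotropic for the hermitian form $h$ and avoids $\widehat\cW$. For the first part I would use the fact, noted just before the statement, that $M_{\t}^\tau\neq M_{\t}$ while $M_{\t}$ is $\tau^2$-invariant (since $\tau^2$ is the $q^2$-Frobenius on coordinates and it swaps $\theta(t)\leftrightarrow\theta(t^{q^2})$, hence fixes their span $M_\t$). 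A line of $\PG(3,q^4)$ stable under the Frobenius $\sigma=\tau^2$ of $\Fqf/\Fqt$ is defined over $\Fqt$, so $M_{\t}\cap V(4,q^2)$ has $\Fqt$-dimension exactly $2$ and spans $M_{\t}$ over $\Fqf$; thus $m_{\t}:=M_{\t}\cap V(4,q^2)$ is an honest line of $\PG(3,q^2)$.

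Next I would check $m_{\t}$ lies in $H(3,q^2)$, i.e.\ it is totally isotropic for $h$. Since $\theta(t)$ is a singular point of the extended quadratic form $\overline{Q}$ with $\overline{Q}(\X)=X_1X_4-X_2X_3$, and the vector $(1,t^q,t,t^{q+1})$ satisfies $\overline{Q}=0$ for every $t\in\Fqf$, both spanning vectors of $M_{\t}$ are isotropic; one then computes the associated bilinear value $b\big((1,t^q,t,t^{q+1}),(1,t^{q^3},t^{q^2},t^{q^3+q^2})\big)$ and finds it equals $t^{q^3+q^2}+t^{q+1}-t^{q+q^3}-t^{q^2+1}=-(t^q-t^{q^3})(t-t^{q^2})=-(t-t^{q^2})^{q}(t-t^{q^2})$, which is nonzero for $t\notin\Fqt$ but lies in $\Fqt$ — so $M_{\t}$ is \emph{not} totally singular for $\overline Q$, yet I must instead verify isotropy for $h$. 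The point is that on the $\Fqt$-rational line $m_{\t}$ the hermitian form $h$ restricts, via $h|_{\widehat V}=\widehat b$, to the alternating form; since any line of $\PG(3,q^2)$ through a point of $\widehat\cQ$ behaves controllably, I would pick an explicit $\Fqt$-basis of $m_{\t}$ (obtained by taking suitable $\Fqt$-linear combinations of $(1,t^q,t,t^{q+1})$ and its $\tau^2$-conjugate, e.g.\ the ``trace'' and ``norm-twisted'' combinations that land in $V(4,q^2)$) and evaluate $h$ on this basis directly, showing all values vanish.

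For the disjointness from $\widehat\cW$: a point of $m_{\t}$ lying in $\widehat\cW$ would be an $\Fqt$-extension of a point of $W(3,q)$, i.e.\ of the form $\<(\alpha,x^q,x,\beta)\>$ with $\alpha,\beta\in\Fq$, $x\in\Fqt$. I would argue that the only points of $M_{\t}$ that are $\tau$-fixed lie in $M_{\t}\cap M_{\t}^\tau$; since $M_{\t}\neq M_{\t}^\tau$ their intersection is at most a point, and I would show this intersection point (if it exists) is not on $m_{\t}$, or directly that no nonzero $\Fqt$-combination $\lambda\theta(t)+\mu\theta(t^{q^2})$ with the combination lying in $V(4,q^2)$ can have the ``$\widehat V$-shape'' $(\alpha,x^q,x,\beta)$ with $\alpha,\beta\in\Fq$ unless it is zero — because $t\notin\Fqt$ forces $t^q\neq(t)^{?}$ incompatibilities in the second/third coordinates. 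The main obstacle I anticipate is the bookkeeping in the second step: one must be careful that $M_{\t}\cap V(4,q^2)$, although a well-defined $\Fqt$-subspace by the Galois-descent argument, actually consists of \emph{isotropic} vectors for $h$ rather than merely for $\overline Q$; reconciling ``$M_{\t}$ is a secant, not a tangent/generator'' (which is how the elliptic lines $\bar\ell_{\t}$ of the conic arise) with ``$m_{\t}$ is totally isotropic in $H(3,q^2)$'' requires using the precise relation $h|_{\widehat V}=\widehat b$ and the $q$-twist built into $\chi(g)=g\otimes g^q$, so I would organize the computation around an explicit $\Fqt$-basis of $m_{\t}$ and verify the three hermitian inner products vanish by a short direct calculation.
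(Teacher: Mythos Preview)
Your plan has the right shape, and the Galois--descent argument for step~1 is cleaner than what the paper does: the paper instead writes down explicitly the vectors $\X_\lambda=\lambda(1,t^q,t,t^{q+1})+\lambda^{q^2}(1,t^{q^3},t^{q^2},t^{q^3+q^2})$ for $\lambda\in\Fqf$, observes these are exactly the elements of $M_\t$ lying in $V(4,q^2)$, and that they form a line $m_\t$ of $\PG(3,q^2)$. For step~2 the paper then simply checks $h(\X_\lambda,\X_\lambda)=0$ for all $\lambda$, which is enough because for a hermitian form a subspace all of whose vectors are isotropic is totally isotropic. Your detour through the symmetric bilinear form $b$ attached to $\overline Q$ and the remark about $M_\t$ being a ``secant'' is irrelevant to the hermitian question; and your invocation of $h|_{\widehat V}=\widehat b$ cannot help, since the whole point is that $m_\t$ is \emph{not} contained in $\widehat V$. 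The hermitian check is just a direct calculation on $\X_\lambda$ (or on any $\Fqt$-basis), with no conceptual input needed.

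For step~3 your $\tau$-argument contains a genuine gap. You write that if $M_\t\cap M_\t^\tau$ is a single point you would ``show this intersection point is not on $m_\t$''. In fact the opposite holds: since $\tau$ is an involution interchanging $M_\t$ and $M_\t^\tau$, any intersection point $P$ is $\tau$-fixed; being also $\tau^2$-fixed (with $\tau^2$ the $q^2$-Frobenius on $V(4,q^4)$) forces $P\in\PG(3,q^2)$, hence $P\in m_\t$; and a $\tau$-fixed point of $\PG(3,q^2)$ is automatically of the form $\<v\>$ with $v\in\widehat V$, i.e.\ lies in $\widehat\cW$. So your reduction actually shows $m_\t\cap\widehat\cW\neq\emptyset\iff M_\t\cap M_\t^\tau\neq\emptyset$, and to finish you must prove that $M_\t$ and $M_\t^\tau$ are skew in $\PG(3,q^4)$ --- a $4\times4$ determinant computation of the same order of difficulty as the paper's direct approach, which just writes down the system expressing $\X_\lambda=a(\alpha,x^q,x,\beta)$ with $a\in\Fqt$, $\alpha,\beta\in\Fq$, $x\in\Fqt$ and argues it is inconsistent. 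Either route works, but neither is a free lunch.
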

\begin{proof}
A straightforward computation shows that the vectors  in $M_{\t}\cap V(4,q^2)$ are precisely  $\X_\lambda=(\lambda+\lambda^{q^2}, \lambda 
t^q+\lambda^{q^2} t^{q^3},\lambda t+\lambda^{q^2} t^{q^2},\lambda t^{q+1}+\lambda^{q^2} t^{q^3+q^2})$, for all $\lambda\in\Fqf$, and they form a line $m_\t$ of $\PG(3,q^2)$. Since $h(\X_{\lambda}, \X_{\lambda})=0$ for all $\lambda\in\Fqf$,  $m_\t$ is a line of $H(3,q^2)$.  Finally, in order to prove that $m_{\t}$ is disjoint from $\widehat\cW$, consider the following system:
\begin{equation}\label{eq_6}
\begin{aligned}
&a \,\alpha  \ =\lambda+\lambda^{q^2}\\
&a \, x^{q} =\lambda t^{q} +\lambda^{q^2} t^{q^3}\\
&a \, x \ =\lambda t +\lambda^{q^2} t^{q^2}\\
&a \, \beta \ =\lambda t^{q+1}+\lambda^{q^2} t^{q^3+q^2}\ , 
\end{aligned}
\medskip
\end{equation}
with $\alpha, \beta \in \Fq$, $x,a \in \Fqt$, $\lambda \in \Fqf$, $t \in \Fqf \setminus \Fqt$. The existence of a solution for \eqref{eq_6}, or rather the existence of $a \in \Fqt$,  makes the system  inconsistent.
This concludes the proof.
\end{proof}  
\begin{proposition}
The sets $\{m_{\t}:t \in\Fqf\setminus \Fqt\}$ and  $\{m_{\t}^\tau:t \in\Fqf\setminus \Fqt\}$ are precisely the two orbits of $\mP\Omega^-(\widehat V)$ on the lines of $H(3,q^2)$  disjoint from $\widehat\cW$.
\end{proposition}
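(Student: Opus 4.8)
The plan is to match the two families $\{m_\t:t\in\Fqf\setminus\Fqt\}$ and $\{m_\t^\tau:t\in\Fqf\setminus\Fqt\}$ with the two orbits of $\mP\Omega^-(\widehat V)$ provided by Theorem~\ref{th_3}. The geometric device underlying everything is the hyperbolic quadric $\cK$ of $\PG(3,q^4)$ defined by the quadratic form $Q(\X)=X_1X_4-X_2X_3$: a direct substitution gives $\theta(u)\in\cK$ for every $u\in\Fqf\cup\{\infty\}$, so $\Im\theta\subseteq\cK$, and $\cK$ is $\tau$-invariant because $Q(\tau\X)=Q(\X)^q$.

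First I would record the routine facts. By Lemma~\ref{lem_2}, together with the fact that $\tau$ preserves $H(3,q^2)$ and fixes $\widehat\cW$, every $m_\t$ and every $m_\t^\tau$ is a line of $H(3,q^2)$ disjoint from $\widehat\cW$. Next, using the explicit vectors $\X_\lambda$ spanning $m_\t$ from the proof of Lemma~\ref{lem_2}, a short computation gives $Q(\X_\lambda)=\lambda^{1+q^2}(t+t^{q^2})^{q+1}$, which is nonzero for $\lambda\neq 0$ since $t\notin\Fqt$. Hence $m_\t$ contains no point of $\cK$; in particular its $\Fqf$-span $M_\t$ is not contained in $\cK$, so $M_\t$ is a secant of $\cK$ and meets it --- hence meets $\Im\theta$ --- exactly in $\{\theta(t),\theta(t^{q^2})\}$. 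Consequently $m_\t$ determines $M_\t$, hence the pair $\{\theta(t),\theta(t^{q^2})\}$, hence $\t$; so $\t\mapsto m_\t$ is injective and $|\{m_\t\}|=|\{m_\t^\tau\}|=(q^4-q^2)/2$.

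Then I would show $\mP\Omega^-(\widehat V)$ is transitive on $\{m_\t\}$. The permutational isomorphism built above extends verbatim (the same formulas for $\theta$ and $\chi$) to an equivariant identification, via $\theta$, of the action of $\PSL(2,q^2)$ on $\Fqf\cup\{\infty\}$ with the action of $\mP\Omega^-(\widehat V)=\chi(\PSL(2,q^2))$ on $\Im\theta$; since $\chi(g)$ is $\Fqt$-linear it preserves $V(4,q^2)$, so it carries $M_\t$ to $M_{g\cdot\t}$ and $m_\t$ to $m_{g\cdot\t}$, where $g\cdot\t=\{g(t),g(t)^{q^2}\}$. As $q$ is even, $\PSL(2,q^2)=\PGL(2,q^2)$, which acts transitively on $\PG(1,q^4)\setminus\PG(1,q^2)=\Fqf\setminus\Fqt$ (a point stabilizer is a cyclic group of order $q^2+1$ and $|\PGL(2,q^2)|=q^2(q^2-1)(q^2+1)$), hence --- the action commuting with $u\mapsto u^{q^2}$ --- transitively on the pairs $\t$. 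Therefore $\{m_\t\}$ is a single $\mP\Omega^-(\widehat V)$-orbit on the lines of $H(3,q^2)$ disjoint from $\widehat\cW$.

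The crux --- and the step I expect to be the real obstacle --- is the disjointness $\{m_\t\}\cap\{m_\s^\tau\}=\emptyset$. I would deduce it from the quadric $\cK$ as follows. Since $\tau$ preserves $\cK$ and maps lines to lines, $M_\t^\tau$ is again a secant of $\cK$, meeting it exactly in $\theta(t)^\tau$ and $\theta(t^{q^2})^\tau$; comparing coordinates shows that $\theta(u)^\tau\in\Im\theta$ forces $u\in\Fqt$, so neither of these two points lies on $\Im\theta$, and therefore $M_\t^\tau\cap\Im\theta=\emptyset$ because $\Im\theta\subseteq\cK$. On the other hand $\theta(s)\in M_\s\cap\Im\theta$, so $M_\s\neq M_\t^\tau$ and hence $m_\s\neq m_\t^\tau$. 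Finally I would conclude: by Theorem~\ref{th_3} the group $\mP\Omega^-(\widehat V)$ has exactly two orbits on the lines disjoint from $\widehat\cW$, both relative hemisystems and hence of the same size; one of them is $\{m_\t\}$, and the other, say $O$, is the complement of $\{m_\t\}$ among the disjoint lines, so $|O|=(q^4-q^2)/2$; by the previous paragraph $\{m_\t^\tau\}$ is a set of disjoint lines contained in $O$ of cardinality $(q^4-q^2)/2$, whence $\{m_\t^\tau\}=O$. The only genuinely delicate point is the reduction of disjointness to $\cK$; bypassing it would require estimating $|M_\t^\tau\cap\Im\theta|$ directly, which leads to a messier computation with an additive polynomial in place of the clean bound ``a line meets a quadric in at most two points''.
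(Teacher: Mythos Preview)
Your argument is correct. The main difference from the paper's proof lies in how the second orbit is identified. The paper invokes the fact (from \cite[Theorem~5]{pw}) that $\tau$ \emph{interchanges} the two $\mP\Omega^-(\widehat V)$-orbits on lines disjoint from $\widehat\cW$; once $\{m_\t\}$ is shown to be one orbit, $\{m_\t^\tau\}=\{m_\t\}^\tau$ is automatically the other, and no separate disjointness argument is needed. The paper then establishes transitivity by writing down, for each $t=x+y\omega$, an explicit $g\in\PSL(2,q^2)$ carrying $M_{\{\omega,\omega^{q^2}\}}$ to $M_\t$. Your route is more self-contained: you replace the explicit element by the order count for the action of $\PGL(2,q^2)=\PSL(2,q^2)$ on $\Fqf\setminus\Fqt$, and you replace the citation of the $\tau$-swap by a direct geometric proof of disjointness via the auxiliary hyperbolic quadric $\cK$ in $\PG(3,q^4)$. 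The quadric trick is a genuinely nice addition: it simultaneously yields injectivity of $\t\mapsto m_\t$ (the $\Fqf$-span $M_\t$ is a secant of $\cK$, so $m_\t$ remembers $\{\theta(t),\theta(t^{q^2})\}$) and the separation $M_\t^\tau\cap\Im\theta=\emptyset$, both by the elementary bound that a line meets a quadric in at most two points. The trade-off is length: the paper's proof is shorter because the $\tau$-swap is already available from \cite{pw}, whereas your argument rederives that consequence from scratch. Either way the injectivity of $\t\mapsto m_\t$ is needed to match $|\{m_\t\}|$ with the hemisystem size; you make this explicit, while in the paper it is left implicit.
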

\begin{proof}
From the proof of  \cite[Theorem 5]{pw}, $\mP\Omega^-(\widehat V)$ has two orbits on the lines of $H(3,q^2)$  disjoint from $\widehat\cW$, and  these two orbits are interchanged by $\tau$. We recall that $m_{\t}$ is uniquely defined by the line $M_{\t}$ of $\PG(3,q^4)$, which is spanned by $\theta(t)$ and $\theta(t^{q^2})$. Hence, it suffices  to prove that $\{M_{\t}:t \in\Fqf\setminus \Fqt\}$ is an   orbit of $\mP\Omega^-(\widehat V)$.


Let $\omega \in \Fqf \setminus \Fqt$ such that $\omega^{q^2}=\omega+1$ and  $\omega^2+\omega=\delta$, with $\delta \in \Fqt \setminus \T_0$, $\delta\neq 1$.
For all $t \in \Fqf \setminus \Fqt$, write  $t=x+y \omega$, with $x, y \in \Fqt$, $y \neq 0$. 

As a group acting on the projective line $\PG(1,q^2)$ assimilated to the set $\Fqt\cup\{\infty\}$,  $\PSL(2,q^2)$ may be identified with the group of linear fractional transformations
\[
z\mapsto \frac{az+b}{cz+d},
\]
where $ad-bc$ is a non-zero square in $\Fqt$ \cite{taylor}.  For any given  $t=x+y \omega \in \Fqf \setminus \Fqt$, let $ g\in\PSL(2,q^2)$ with matrix  $[\,1,0;\,x,y\,]$. Then, $\chi(g)=g\otimes g^q$ maps $\left(\theta(\omega),\theta(\omega^{q^2})\right)$ to $\left(\theta(t),\theta(t^{q^2})\right)$  by taking into account $\omega^{q^2}=\omega+1$. This implies that $\chi(g)\in\mP\Omega(\widehat V)$  maps the  line $M_{\{\omega, \omega^{q^2}\}}$  to $M_{\t}$. 
\end{proof}

\begin{corollary}\label{cor_1}
The sets $\{m_{\t}:t\in\Fqf\setminus\Fqt\}$ and $\{m_{\t}^\tau:t\in\Fqf\setminus\Fqt\}$ are the Penttila-Williford relative hemisystems.
\end{corollary}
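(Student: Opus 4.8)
The plan is to identify the Penttila--Williford relative hemisystems with the two sets produced in the previous Proposition, namely $\{m_{\t}:t\in\Fqf\setminus\Fqt\}$ and $\{m_{\t}^\tau:t\in\Fqf\setminus\Fqt\}$. By Theorem~\ref{th_3}, the relative hemisystems with respect to $W(3,q)$ are exactly the two orbits of $\mathrm P\Omega^-(\widehat V)$ on the lines of $H(3,q^2)$ disjoint from $\widehat\cW$. The preceding Proposition asserts that these two orbits coincide with the two displayed sets. Hence the corollary follows immediately: each of $\{m_{\t}:t\in\Fqf\setminus\Fqt\}$ and $\{m_{\t}^\tau:t\in\Fqf\setminus\Fqt\}$ is one of the two orbits, and by Theorem~\ref{th_3} every such orbit is a relative hemisystem.

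Concretely, the argument is a two-line deduction: first invoke Theorem~\ref{th_3} to know that the orbits of $\mathrm P\Omega^-(\widehat V)$ on lines of $H(3,q^2)$ disjoint from $\widehat\cW$ are precisely the Penttila--Williford relative hemisystems; then invoke the immediately preceding Proposition, which identifies those orbits explicitly as the two sets $\{m_{\t}\}$ and $\{m_{\t}^\tau\}$. Combining the two statements yields the claim. The lines $m_{\t}$ are indeed lines of $H(3,q^2)$ disjoint from $\widehat\cW$ by Lemma~\ref{lem_2}, so the hypotheses of Theorem~\ref{th_3} apply to the members of these sets.

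There is essentially no obstacle here: the corollary is a formal consequence of Theorem~\ref{th_3} together with the Proposition it follows, and all the substantive work (verifying that $m_\t$ lies on $H(3,q^2)$ and misses $\widehat\cW$, and that $\chi(\PSL(2,q^2)) = \mathrm P\Omega^-(\widehat V)$ acts transitively on each set) has already been carried out in Lemma~\ref{lem_2} and the two preceding results. The only point worth making explicit is that the two orbits are genuinely distinct, which is guaranteed because $\tau$ interchanges them and $M_{\t}\neq M_{\t}^\tau$ (equivalently $m_\t \neq m_\t^\tau$), as noted in the discussion following the extension of $\theta$ to $\Fqf\cup\{\infty\}$. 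With this, both sets are nonempty $\mathrm P\Omega^-(\widehat V)$-orbits exhausting the lines disjoint from $\widehat\cW$, hence each is a relative hemisystem of $H(3,q^2)$ with respect to $W(3,q)$ in the sense of Penttila and Williford.
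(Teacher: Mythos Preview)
Your proposal is correct and matches the paper's approach exactly: the corollary is stated in the paper without proof, being an immediate consequence of Theorem~\ref{th_3} (the two $\mathrm P\Omega^-(\widehat V)$-orbits are relative hemisystems) together with the preceding Proposition (those orbits are precisely the displayed sets). Your write-up simply spells out this two-step deduction, which is all that is required.
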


\section{The proof of the Main Theorem}

Define $T=\{\{t,t^{q^2}\}: t \in \Fqf\setminus\Fqt\}$, and put $\cX=T$.
By Remark \ref{rem_1}, we need to find a bijection between  the set $\cX$  and the relative hemisystem $\cH=\{m_{\t}:\t\in T \}$ preserving the relations defined on them. 

From the arguments in Section \ref{sec_4}, we may associate the pair $\t=\{t,t^{q^2}\}\in\cX$ with the line $m_\t\in\cH$. Moreover, Corollary \ref{cor_1} gives $|\cH|=(q^4-q^2)/2=|\cX|$, and this contributes to make the mapping $\varphi:\cX\to\cH, \t\mapsto m_{\t}$ a bijection. In order to show that $\varphi$, in fact, preserves the relations, we will move into a different geometric setting. More precisely, we will use the following  dual representation of  $H(3,q^2)$. Via the Klein correspondence $\kappa$, the lines of $\PG(3,q^2)$ are mapped to the points of an orthogonal polar space $Q^+(5,q^2)$ of rank 3 of $\PG(5,q^2)$, which is the so-called the {\em Klein quadric}.  In particular, the lines of  $H(3,q^2)$ are mapped to the points of an orthogonal polar space   $Q^-(5,q)$ of rank 2 in a $\PG(5,q)$   embedded in $\PG(5,q^2)$.  When $q$ is even, $\kappa$ maps the lines of any symplectic polar space of rank 2 embedded in $H(3,q^2)$ to the points of an orthogonal polar space of rank 2, which is the intersection of $Q^-(5,q)$ with a hyperplane of $\PG(5,q)$. The reader is referred to  \cite{hir} for more details on the  Klein correspondence. 
  
Assume $q$ even. In $V(6,q^2)$  consider the 6-dimensional $\Fq$-subspace $\widetilde{V}=\{(x,x^q,y,y^q, z,z^q):x,y,z\in\Fqt\}$.  Let $\PG(\widetilde V)$ be the projective geometry defined by $\widetilde V$. 

We consider the Klein quadric $Q^+(5,q^2)$  defined by the (non-degenerate) quadratic form
$
Q(\X)=X_1X_6+X_2X_5+X_3X_4$
on $V(6,q^2)$. For any given $w=(x,x^q,y,y^q, z,z^q)\in\widetilde V$,  
\[
\widetilde{Q}(w)=Q|_{\widetilde V}(w)=xz^q+x^qz+y^{q+1}.
\]
From \cite[Proposition 2.4]{ks},   $\widetilde{Q}$  is a non-degenerate quadratic form of Witt index 2 on $\widetilde V$ with associated alternating form 
\[
\widetilde b(w,w')=xz'^q+x^qz'+yy'^q+y^qy'+zx'^q+z^qx'.
\]
 Therefore, $\widetilde{Q}$  gives rise to an orthogonal polar space $Q^-(5,q)$ of $\PG(\widetilde{V})$ embedded in $Q^+(5,q^2)$.

For any  subspace  $X$ of $\widetilde{V}$, set 
\[
X^\perp=\{w \in \widetilde{V}:  \widetilde b(w,u)=0, \mathrm{\ for\ all\ }u\in X \}.
\]

Let  $Q(4,q)$ be the polar space whose points are the $\kappa$-image of the lines of $\widehat\cW$, and  $\Gamma$ be the hyperplane of $\PG(\widetilde V)$ containing $Q(4,q)$. For a complete description of $\Gamma$ observe that the pairs 
\[
\{(1,0,0,0),(0,x,x^q,0)\}, \ \ \{(0,0,0,1),(0,x,x^q,0)\}, \ \  \{(1,1,1,1),(x+x^q,x,x^q,0)\}, 
\]
 with $x\in\Fqt$, span lines  of  $\widehat\cW$ which give three skew lines of $Q(4,q)$ under $\kappa$ generating $\Gamma$. It follows that $\Gamma=\{(x,x^q,c,c,z,z^q):x,z\in\Fqt,c\in\Fq\}$.

Under $\kappa$, the line $m_{\t}$ of $\cH$  is mapped to the point $P_{\t}=\<w_{\t}\>$  of $\PG(\widetilde V)$, where
\[
w_{\t}=(t^q+t^{q^3}, t+t^{q^2},t^{1+q}+t^{q^2+q^3}, t^{1+q^3}+t^{q+q^2},t^{1+q+q^3}+t^{q+q^2+q^3}, t^{1+q+q^2}+t^{1+q^2+q^3}).
\]
Note that $P_{\t}$  is  in $Q^-(5,q)$, but not in  $Q(4,q)$. Let $P'_\t=\kappa(m^{\tau}_{\t})$.   Since $m_{\t}$ and $m^{\tau}_{\t}$ are disjoint lines of $H(3,q^2)$, the line $L_{\t}$ spanned by  $P_{\t}$ and $P'_{\t}$ intersects $Q^-(5,q)$ just at $P_{\t}$ and $P'_{\t}$. 
On the other hand, $m_\t$ and $m^{\tau}_{\t}$ subtend the same spread $\cS_\t=\cS_{m_\t}$ in $\widehat\cW$. The $\kappa$-image of $\cS_\t$ is an orthogonal polar space of rank 1 contained in $Q(4,q)$ \cite{hir}, and it  turns out this is precisely $Q(4,q)\cap L_{\t}^\perp$. Consequently,  $L_{\t}^{\perp}$ is  in $\Gamma$ and $\Gamma^{\perp}=\<(0,0,1,1,0,0)\>=\<w_0\>$ is a point of $L_{\t}$, for all $t\in\Fqf\setminus\Fqt$. The symbol  $\widetilde\cO_{\t}$ will be used to indicate  $Q(4,q)\cap L_{\t}^\perp$.

\comment{Via $\kappa$, the involution $\tau$ of $V(4,q^2)$ acting on the lines of $\PG(3,q^2)$ produces the involutorial semilinear transformation
\[
\begin{array}{rccc}
T: & V(6,q^2) & \longrightarrow  & V(6,q^2) \\
      & (X_1,X_2,X_3,X_4,X_5,X_6) &\mapsto  & (X_2^q,X_1^q,X_3^q,X_4^q,X_6^q,X_5^q)
\end{array},
\]
which gives rise to a projectivity  of $\PG(\widetilde V)$ that preserves $Q^-(5,q)$, fixes $Q(4,q)$ pointwise and interchanges $P_{\t}$ with $P'_\t$. Hence, $L_{\t}^{\perp}$ is  contained in $\Gamma$. This implies that $\Gamma^{\perp}=\<(0,0,1,1,0,0)\>$ is a point of $L_{\t}$, for all $t\in\Fqf\setminus\Fqt$.

The symbol  $\widetilde\cO_{\t}$ will be used to indicate  $Q(4,q)\cap L_{\t}^\perp$.   Note that $\widetilde\cO_{\t}$ is the $\kappa$-image of the regular spread $\cS_{m_{\t}}$, and it is an elliptic quadric in $L_{\t}^\perp$.
}

For any given distinct pairs $\s$ and $\t$, let $\Pi_{\s,\t}$ be the plane of $\PG(\widetilde V)$ spanned by $\Gamma^\perp$, $P_{\s}$ and $P_{\t}$.
The restriction  of $\widetilde Q$ and  $\widetilde b$ on $\Pi_{\s,\t}$ will be denoted by $\widetilde Q_{\s,\t}$ and $\widetilde b_{\s,\t}$, respectively. Identifying a triple $(a,b,c)\in\Fq^3$ with the vector $v=aw_\s+b w_0+c w_\t \in \Pi_{\s,\t}$, we obtain that the action of $\widetilde b_{\s,\t}$ induced on $\Fq^3$ is given by the matrix
\[
B=\begin{pmatrix}
\widetilde b(w_{\s},w_{\s}) & \widetilde b(w_{\s},w_{0}) & \widetilde b(w_{\s},w_{\t}) \\
\widetilde b(w_{0},w_{\s}) & \widetilde b(w_{0},w_{0}) & \widetilde b(w_0,w_{\t})   \\
\widetilde b(w_{\t},w_{\s}) & \widetilde b(w_{\t},w_{0}) & \widetilde b(w_{\t},w_{\t}) 
\end{pmatrix}=\begin{pmatrix}
0 & \Tr(s^{q+1}) & \widetilde b(w_{\s},w_{\t}) \\
\Tr(s^{q+1}) & 0 & \Tr(t^{q+1})  \\
\widetilde b(w_{\s},w_{\t}) & \Tr(t^{q+1})  & 0
\end{pmatrix};
\]
here $\Tr$ is the trace map from $\Fqf$ on $\Fq$. A straightforward calculation shows that $\Pi_{\s,\t}$ is degenerate as $\Rad(\Pi_{\s,\t})=\<v_{\s,\t}\>$, where 
\[
v_{\s,\t}=
\Tr(t^{q+1})w_\s+ \widetilde b(w_{\s},w_{\t})w_0+ \Tr(s^{q+1})w_\t.
\]
It is easily seen that
\begin{equation}\label{eq_9}
\begin{aligned}
\widetilde Q_{\s,\t}(v_{\s,\t})&=\widetilde b(w_{\s},w_{\t})\left(\widetilde b(w_{\s},w_{\t})+\Tr(s^{q+1})\Tr(t^{q+1})\right)\\
&=\widetilde b(w_{\s},w_{\t}) \ \widetilde b(w_{\s},w'_{\t}),
\end{aligned}
\end{equation}
where 
\[
w'_\t= (t^q+t^{q^3}, t+t^{q^2},t^{q+q^2}+t^{1+q^3}, t^{1+q}+t^{q^2+q^3},t^{1+q+q^3}+t^{q+q^2+q^3}, t^{1+q+q^2}+t^{1+q^2+q^3}).
\]
Note that $P'_\t=\kappa(m_\t^\tau)=\<w'_\t\>$. 

Now two cases are possible according as  $v_{\s,\t}$ is singular or not. 

If $\widetilde Q_{\s,\t}(v_{\s,\t})=0$, then $\widetilde Q_{\s,\t}$ is degenerate, and $\cC_{\s,\t}=\Pi_{\s,\t}\cap Q^-(5,q)$ consists of two distinct lines through $\<v_{\s,\t}\>$, as $P_{\s}$, $P'_{\s}$, $P_{\t}$ and  $P'_{\t}$ are distinct points no three of them collinear. This yields that $L_{\s}^\perp$ meets $L_{\t}^\perp$ in the plane  $\Pi_{\s,\t}^\perp$ of $\Gamma$, with $\widetilde Q|_{\Pi_{\s,\t}^\perp}$ degenerate, and $\widetilde\cO_{\s} \cap \widetilde\cO_{\t}=\Pi_{\s,\t}^\perp\cap Q^-(5,q)=\<v_{\s,\t}\>$. By taking into account \eqref{eq_9}, there are two possibilities of obtaining zero for $\widetilde Q_{\s,\t}(v_{\s,\t})$: either $\widetilde b(w_{\s},w_{\t})=0$ or $\widetilde b(w_{\s},w'_{\t})=0$.

If $\widetilde b(w_{\s},w_{\t})=0$, then $P_\s$ and $P_\t$ are collinear in $Q^-(5,q)$, or equivalently,  the lines $m_\s$ and $m_\t$ are concurrent, that is $(m_\s, m_\t) \in \widetilde R_1$ (see Theorem \ref{th_2}). On the other hand, by taking into account (\ref{eq_10}),
\[
\begin{aligned}
0=\widetilde b( & w_{\s}, w_{\t})\\ & =(s^{q^2}+s)^q(t^{q^2}+t)^q(s+t^{q^2})(s^{q^2}+t)+(s^{q^2}+s)(t^{q^2}+t)(s+t^{q^2})^q(s^{q^2}+t)^q
\end{aligned}
\]
if and only if  
\[
\nu=\frac{1}{\rho(s,t)+1}=\frac{(s+t^{q^2})(s^{q^2}+t)}{(s^{q^2}+s)(t^{q^2}+t)}\in \Fq.
\]

When $\nu\in\Fq$,  $\widehat\rho(s,t)\in\S_0^*$ by   Lemma \ref{lem_1}, that is $(\s,\t)\in R'_1$ (see Remark \ref{rem_1}).
On the other hand, if $\widehat\rho(s,t)=\nu^2+\nu\in\S_0^*$, then there exists $z\in\Fq$ such that $(z+\nu)^2+(z+\nu)=0$, which implies either $z=\nu$ or $z+1=\nu$. In both cases $\nu\in\Fq$. Therefore, $(m_\s, m_\t) \in \widetilde R_1$  if and only if $(\s,\t)\in R'_1$.

If $\widetilde b(w_{\s},w'_{\t})=0$, then  $P_\s$ and $P'_\t$ are collinear in $Q^-(5,q)$, and this  leads to the non-collinearity of $P_\s$  and  $P_\t$. This means that $(m_\s, m_\t) \in \widetilde R_2$ on one side, and   $(\s,\t)\in R'_2$ on the other one. In fact,
\[
\begin{aligned}
0=\widetilde b( & w_{\s}, w'_{\t})\\ =&(s^{q^2}+s)^q(t^{q^2}+t)^q(s+t^{q^2})(s^{q^2}+t)+(s^{q^2}+s)(t^{q^2}+t)(s+t^{q^2})^q(s^{q^2}+t)^q+\\ & +(s^{q^2}+s)^{q+1}(t^{q^2}+t)^{q+1}
\end{aligned}
\]
if and only if
\[
\nu^q+\nu=\Bigg(\frac{1}{\rho(s,t)+1}\Bigg)^q +\frac{1}{\rho(s,t)+1}=1.
\]
When $\nu^q+\nu=1$, then $\nu\notin\Fq$.  This implies that the equation $Z^2+Z=\widehat\rho(s,t)$ has no solutions in $\Fq$, that is $\widehat\rho(s,t)\in\S_1$, i.e. $(\s,\t)\in R'_2$. On the other hand,  $\widehat\rho(s,t)=\nu^2+\nu\in\S_1\subset\Fq$ implies $\nu\not\in\Fq$. As $\widehat\rho(s,t)\in\Fq$, then  $(\nu^q+\nu)^2+(\nu^q+\nu)=0$ holds, whence $\nu^q+\nu=1$.

Finally, if $\widetilde Q_{\s,\t}(v_{\s,\t})\neq0$, then $\widetilde Q_{\s,\t}$ is non-degenerate and $\<v_{\s,\t}\>$ is the nucleus of the (non-degenerate) conic $\cC_{\s,\t}$. Therefore, $\widetilde\cO_{\t}$ and $\widetilde\cO_{\s}$ meet in  $q+1$ points of $\Pi_{\s,\t}^\perp\cap Q(4,q)$. Then, $\cS_{m_\t}=\kappa^{-1}(\widetilde \cO_\t)$ and  $\cS_{m_\s}=\kappa^{-1}(\widetilde \cO_\s)$ meet in exactly $q+1$ lines in $\widehat\cW$, that is $(m_\t,m_\s)\in \widetilde R_3$. 
It is clear that $(m_\s,m_\t)\in\widetilde R_3$ if and only if $(\s, \t) \in R'_3$ by exclusion.

Summing up, for each $i=0,\ldots,3$, we have 
\begin{equation*}
(\s, \t) \in R'_i \mathit{\ \ \ \ \ if\ and\ only\ if\ \ \ \ \ } (m_\s, m_\t)=\varphi(\s, \t) \in \widetilde{R}_i,\\
\end{equation*}
i.e. $\varphi$ induces a bijection between $R'_i$ and $\widetilde{R}_i$, thus achieving our aim.
\section*{Acknowledgments}
\noindent The authors would like to thank the referees for their 
useful comments, and in particular for pointing us out the isomorphism of the strongly regular graphs which are involved.

\comment{ In fact, the graphs are isomorphic to the Brouwer-Wilbrink graphs (in the hyperbolic case) and the Metz graphs (in the elliptic case). For the hyperbolic case, this was conjectured in [3] and proved in [5]; for the elliptic case, this was conjectured for q = 4 in [9], and will be proved for general even q in [10].

 Two papers, one of Xiang and Hollmann and another of Penttila and Williford, both construct association schemes with the same parameters. In both papers there is an isomorphism question, in the former an isomorphism of strongly regular graphs, and in the latter of the 3 class association schemes from both papers.

Since the strongly regular graph is a relation of the 3 class scheme, the resolution of the latter question also resolves the former. 

While the paper of Xiang and Hollman announced a resolution to the question in a forthcoming paper, such a paper appears to have never been published, and since that was 14 years ago it is safe to say it is still open.

The paper in question resolves an open question in association schemes.
}

\end{document}